\newenvironment{enumeratei}{\begin{enumerate}[\upshape (i)]}{\end{enumerate}}
\numberwithin{equation}{section}
\theoremstyle{plain}
 \newtheorem{theorem}{Theorem}[section]
 \newtheorem{lemma}[theorem]{Lemma}
\theoremstyle{definition}
 \newtheorem{definition}[theorem]{Definition}
 \newtheorem{example}[theorem]{Example}
 \newtheorem{case}[theorem]{Case}
 \newtheorem*{initstep}{Initial step}
 \newtheorem*{successorstep}{Successor  step}
 \newtheorem*{limitstep}{Limit step}
\newcommand \url [1] {\tt{#1}}
\newcommand \labqa {\textup{(C1)}}
\newcommand \labqb {\textup{(C2)}}
\newcommand \labaxa {\textup{(A1)}}
\newcommand \labaxh {\textup{(A2)}}
\newcommand \labaxb {\textup{(A3)}}
\newcommand \labaxc {\textup{(A4)}}
\newcommand \labaxd {\textup{(A5)}}
\newcommand \labaxe {\textup{(A6)}}
\newcommand \labaxf {\textup{(A7)}}
\newcommand \labaxg {\textup{(A8)}}
\newcommand \princ[1] {\textup{Princ}(#1)}
\newcommand \cg[2] {\textup{con}(#1,#2)}
\newcommand \congen[1] {\textup{con}(#1)}
\newcommand \conbiggen[1] {\textup{con}\bigl(#1\bigr)}
\newcommand \cgi[3] {\textup{con}_{#1}(#2,#3)}
\newcommand\alg [1] {{\mathcal #1}}
\newcommand \leqnu {\mathrel{\leq_\nu}}
\newcommand \lessnu {\mathrel{<_\nu}}
\newcommand \leqnuspa {\mathrel{\leq_{\spa\nu}}}
\newcommand \parallelnu {\mathrel{\parallel_\nu}}
\newcommand \pairs [1] {{\textup{Pairs}^{\leq}(#1)}}
\newcommand \Quord[1]{\textup{Quord}(#1)}
\newcommand \quo[2] {\textup{quo}(#1,#2)}
\newcommand \quos[1] {\textup{quo}(#1)}
\newcommand \bigquos[1] {\textup{quo}\bigl(#1\bigr)}
\newcommand \nablaell [1] {\nabla_{\kern -2pt #1}}
\newcommand \blokk[2] {[#1]#2}
\newcommand \vcs[1]{{#1^{\scriptscriptstyle{\mathord{\pmb{\vartriangle}}}}}}
\newcommand \fcs[1]{{#1^\ast}}
\newcommand \acs[1]{{#1_\ast}}
\newcommand \spa[1]{{#1^{\scriptscriptstyle{{{\mathord{ \blacktriangleright  }}}}}}}
\newcommand \vesz[1]{{#1}^{\bullet}}
\newcommand \grlat {L_{\textup{GG}}} 
\newcommand \restrict[2] {{#1\rceil_{#2}}}
\newcommand \qq[1] {{\kern #1 pt}}
\newcommand\ideal[1]{\mathord\downarrow #1}
\newcommand\filter[1]{\mathord\uparrow #1}
\newcommand \tuple [1] {\langle #1\rangle}
\newcommand \pair [2] {\tuple{#1,#2}}
\renewcommand\phi{\varphi}
\renewcommand\emptyset{\varnothing}
\newcommand \tbf [1] {\textbf{#1}} 
\newcommand \set[1] {\{#1\}}
\newcommand \bigset[1] {\bigl\{#1\bigr\}}
\renewcommand\phi{\varphi}
\renewcommand\epsilon{\varepsilon}
\newcommand \Then {\mathrel{\Longrightarrow}} 
\newcommand \nonparallel {\mathrel{
\not\mathord{\kern -1.5 pt\parallel}}}
\newcommand\init [1] {} 
\newcommand\nothing [1] {}
\begin{document}
\title[Principal congruences of a countable lattice]
{The ordered set of principal congruences of a countable lattice}

\author[G.\ Cz\'edli]{G\'abor Cz\'edli}
\email{czedli@math.u-szeged.hu}
\urladdr{http://www.math.u-szeged.hu/$\sim$czedli/}
\address{University of Szeged, Bolyai Institute. 
Szeged, Aradi v\'ertan\'uk tere 1, HUNGARY 6720}

\thanks{This research was supported by
the European Union and co-funded by the European Social Fund  under the project ``Telemedicine-focused research activities on the field of Mathematics, 
Informatics and Medical sciences'' of project number    ``T\'AMOP-4.2.2.A-11/1/KONV-2012-0073'', and by  NFSR of Hungary (OTKA), grant number 
K83219}

\dedicatory{To the memory of Andr\'as P.\ Huhn}

\subjclass {06B10}
%
%

\keywords{principal congruence, lattice congruence, ordered set, order, poset, quasi-colored lattice, preordering, quasiordering}

\date{May 7, 2013}

\begin{abstract} For a lattice $L$, let $\princ L$ denote the ordered set of principal congruences of $L$. In a pioneering paper, G.\ Gr\"atzer characterized the ordered sets $\princ L$ of finite lattices $L$; here we do the same for countable lattices.
He also showed that each bounded ordered set $H$ is isomorphic to $\princ L$ of a bounded lattice $L$. We prove a related statement: if an ordered set $H$ with least element is the union of a chain of principal ideals, then $H$ is isomorphic to $\princ L$ of some lattice $L$.  
\end{abstract}

\maketitle

\section{Introduction}\label{introsection}
\subsection{Historical background}
A classical theorem of \init{R.\,P.\ }Dilworth~\cite{dilwcollect} states  that each finite distributive lattice is isomorphic to the congruence lattice of a finite lattice. 
Since this first result, the \emph{congruence lattice representation problem} has attracted many researchers, and dozens of papers belonging to this topic  have been written. The story of this problem were mile-stoned by \init{A.\,P.\ }Huhn~\cite{huhn} and \init{E.\,T.\ }Schmidt~\cite{schmidtidnl}, reached its summit in
\init{F.\ }Wehrung~\cite{wehrung} and  \init{P.\ }R\r{u}\v{z}i\v{c}ka~\cite{ruzicka}, and was summarized in \init{G.\ }Gr\"atzer~\cite{grbypict}; see also  \init{G.\ }Cz\'edli~\cite{czgrepres} for some additional, recent references.

In \cite{ggprincl}, \init{G.\ }Gr\"atzer started an analogous new topic of Lattice Theory.  Namely, for a lattice $L$, let $\princ L=\pair{\princ L}{\subseteq}$ denote the   ordered set  of principal congruences of $L$. A congruence is \emph{principal} if it is generated by a pair $\pair ab$ of elements.
Ordered sets and lattices with 0 and 1 are called \emph{bounded}. Clearly, if $L$ is a bounded lattice, then $\princ L$ is a bounded ordered set. 
The pioneering theorem in \init{G.\ }Gr\"atzer~\cite{ggprincl} states the converse: each bounded  ordered set $P$ is isomorphic to $\princ L$ for an appropriate bounded lattice $L$. 
Actually, the lattice he constructed is  of length 5.
Up to isomorphism, he also characterized finite bounded ordered sets as the $\princ L$ of finite lattices $L$. 

\subsection{Terminology} Unless otherwise stated, we follow the standard terminology and notation of Lattice Theory; see, for example, \init{G.\ }Gr\"atzer~\cite{GGLT}. 
Our terminology for  weak perspectivity is the classical one taken from \init{G.\ }Gr\"atzer~\cite{grglt}. 
\emph{Ordered sets} are nonempty sets equipped with \emph{orderings}, that is, with reflexive, transitive, antisymmetric relations. Note that an 
ordered set is often called a \emph{partially ordered set}, which is a rather long expression, or a \emph{poset}, which is not tolerated by spell-checkers, or an \emph{order}, which has several additional meanings.

\subsection{Our result}
Motivated by \init{G.\ }Gr\"atzer's theorem mentioned above, our goal is to prove the following theorem. A set $X$ is \emph{countable} if it is finite or countably infinite, that is, if $|X|\leq \aleph_0$. An ordered set $P$ is \emph{directed} if each two-element subset of $P$ has an upper bound in $P$. Nonempty down-sets of $P$ and subsets $\ideal c=\set{x\in P: x\leq c}$   are called \emph{order ideals} and \emph{principal $($order$)$ ideals}, respectively.

\begin{theorem}\label{thmmain}\
\begin{enumeratei}
\item\label{thmmaina} An ordered set $P=\tuple{P;\leq}$ is isomorphic to $\princ L$ for some \emph{countable}  lattice $L$ if and only if  $P$ is a countable directed ordered set with zero.
\item\label{thmmainb} If $P$ is an ordered set with zero and  it is the union of a chain of principal ideals, then there exists a lattice $L$ such that $P\cong \princ L$.
\end{enumeratei}
\end{theorem}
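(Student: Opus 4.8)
The plan is to construct $L$ as an increasing union $L = \bigcup_{i} L_i$ of lattices along the given chain of principal ideals, and to arrange that $\princ{L}$ reproduces $P$. First I would fix a well-ordered cofinal sequence $c_0 \le c_1 \le \cdots \le c_\alpha \le \cdots$ (indexed by an ordinal $\mu$, which may be taken countable if $P$ is countable, but need not be in general) such that $P = \bigcup_{\alpha < \mu} \ideal{c_\alpha}$ and $c_0$ is the zero of $P$. Then each $\ideal{c_\alpha}$ is a \emph{bounded} ordered set — bounded below by $0$ and above by $c_\alpha$ — so Gr\"atzer's theorem from \cite{ggprincl}, or rather a suitable effective/functorial version of it, applies to each $\ideal{c_\alpha}$ individually. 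The key technical demand is to produce not just a lattice $L_\alpha$ with $\princ{L_\alpha} \cong \ideal{c_\alpha}$, but a coherent \emph{directed system} $L_\alpha \hookrightarrow L_\beta$ ($\alpha \le \beta$) of lattice embeddings that on the level of principal congruences induces the inclusions $\ideal{c_\alpha} \hookrightarrow \ideal{c_\beta}$. For this I expect to use the quasi-colored lattice machinery advertised in the keywords: one represents each pair of the poset by a "color" on a suitable edge, and a lattice embedding that preserves colors and reflects the congruence-generation preorder transports $\princ{}$ correctly.

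The heart of the argument is the \emph{successor step}: given $L_\alpha$ with $\princ{L_\alpha} \cong \ideal{c_\alpha}$, and given the larger principal ideal $\ideal{c_{\alpha+1}} \supseteq \ideal{c_\alpha}$, I must embed $L_\alpha$ into a lattice $L_{\alpha+1}$ so that (a) every principal congruence of $L_\alpha$ extends uniquely to a principal congruence of $L_{\alpha+1}$, (b) no new collapsings among old congruences are introduced (the embedding is "congruence-reflecting" on principal congruences), and (c) the newly created principal congruences of $L_{\alpha+1}$, together with the old ones, are ordered exactly as $\ideal{c_{\alpha+1}}$. The standard tool here is to take the relevant construction from \cite{ggprincl} for the bounded poset $\ideal{c_{\alpha+1}}$, observe that it naturally contains a copy of the corresponding construction for the sub-ideal $\ideal{c_\alpha}$, and check that the relevant edges/colors are preserved. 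A uniqueness-of-generating-pair phenomenon (each nonzero principal congruence of these constructed lattices is generated by a \emph{canonical} edge) is what makes (b) and (c) tractable: it reduces the preservation of the order to a combinatorial statement about weak perspectivities between the distinguished edges, which is exactly what quasi-colorings are designed to bookkeep.

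At \emph{limit ordinals} $\lambda$ I would simply set $L_\lambda = \bigcup_{\alpha < \lambda} L_\alpha$; since directed unions of lattices are lattices and congruence generation is finitary, one gets $\princ{L_\lambda} = \bigcup_{\alpha < \lambda}\princ{L_\alpha} \cong \bigcup_{\alpha<\lambda}\ideal{c_\alpha}$, and because $P$ is directed this union is again an order ideal of $P$; if $\lambda = \mu$ this already finishes the construction, and otherwise one continues. Finally $L = \bigcup_{\alpha<\mu} L_\alpha$ satisfies $\princ L = \bigcup_{\alpha<\mu}\princ{L_\alpha} \cong \bigcup_{\alpha<\mu}\ideal{c_\alpha} = P$, using once more that a principal congruence $\congen{a,b}$ of $L$ lives inside some $L_\alpha$ and that the order among principal congruences is determined at a finite stage. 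For part~\eqref{thmmaina}, the extra content is that when $P$ is countable and directed with zero one can choose the $c_\alpha$ in an $\omega$-chain and keep every $L_\alpha$ countable, so $L$ is countable; conversely, $\princ{L}$ of any lattice always has a zero (the pair $\pair aa$) and is directed (join of two principal congruences dominates both and is again principal, being $\conbiggen{(a_1,b_1)} \vee \conbiggen{(a_2,b_2)} = \congen{?}{?}$ — more precisely, the join of $\congen{a_1}{b_1}$ and $\congen{a_2}{b_2}$ equals $\congen{u}{v}$ for a suitable pair, e.g.\ obtained by glueing in a doubling construction), and countability of $L$ forces countability of $\princ L$.

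\medskip
\noindent\textbf{Main obstacle.} The delicate point is the successor step's requirement that the embedding $L_\alpha \hookrightarrow L_{\alpha+1}$ be \emph{congruence-reflecting on principal congruences} while simultaneously \emph{enlarging} the set of principal congruences in the prescribed way: one must prevent the new elements and relations of $\ideal{c_{\alpha+1}}$ from accidentally forcing two previously distinct principal congruences of $L_\alpha$ to coincide, or from creating an unwanted comparability. Managing this is exactly where the quasi-colored lattice formalism — keeping a tight grip, via colors, on which weak-perspectivity paths exist between the canonical edges — will do the decisive work, and I expect the bulk of the paper's technical effort to be spent verifying these color-preservation and reflection properties.
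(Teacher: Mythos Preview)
Your overall strategy matches the paper's: fix a well-ordered cofinal chain, run a transfinite induction building lattices $L_\alpha$ with $\princ{L_\alpha}\cong\ideal{c_\alpha}$, use quasi-colorings to control the congruence-reflection at successor steps, and take directed unions at limits. You also correctly identify the crux (the embedding $L_\alpha\hookrightarrow L_{\alpha+1}$ must reflect principal congruences while adding the prescribed new ones), and the paper indeed spends most of its effort there, packaging the bookkeeping into an ``auxiliary structure'' satisfying eight axioms \labaxa{}--\labaxg{} and proving that a single \emph{horizontal extension} (gluing one copy of Gr\"atzer's $\grlat$ to force one new comparability $p<q$) preserves all axioms. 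The successor step is then a transfinite iteration of such horizontal extensions, preceded by a \emph{vertical extension} adding the new poset elements, rather than invoking Gr\"atzer's bounded construction as a black box; your hope that Gr\"atzer's lattice for $\ideal{c_{\alpha+1}}$ ``naturally contains'' the one for $\ideal{c_\alpha}$ is morally right but would require essentially the same axiomatic analysis to justify.

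Two points to fix. First, your argument that $\princ L$ is directed is wrong as stated: the join of two principal congruences need \emph{not} be principal. What is true, and what the paper uses, is that $\cg{a_i}{b_i}\leq\cg{a_1\wedge b_1\wedge a_2\wedge b_2}{\,a_1\vee b_1\vee a_2\vee b_2}$ for $i=1,2$, so the two have a common \emph{principal} upper bound. Second, at a limit ordinal $\lambda<\mu$ the union $\bigcup_{\alpha<\lambda}\ideal{c_\alpha}$ is an order ideal of $P$ but in general not $\ideal{c_\lambda}$, so your invariant ``$\princ{L_\alpha}\cong\ideal{c_\alpha}$'' is not yet restored and your successor step as phrased does not apply directly; the paper handles this by formulating the extension lemma (its Lemma~\ref{combinlemma}) for an arbitrary order ideal inside a bounded poset, and applying it once more at the limit stage to pass from $\bigcup_{\alpha<\lambda}\ideal{c_\alpha}$ to $\ideal{c_\lambda}$.
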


An alternative way of formulating the condition in part \eqref{thmmainb} is to say that $0\in P$ and there is a  cofinal chain in $P$. 
For a pair $\pair ab\in L^2$ of elements, the least congruence collapsing $a$ and $b$ is denoted by $\cg ab$ or $\cgi Lab$. 
As it was pointed out in \init{G.\ }Gr\"atzer~\cite{ggprincl}, the rule 
\begin{equation}
\cg{a_i}{b_i}\subseteq 
\cg{a_1\wedge b_1\wedge a_2\wedge b_2}{a_1\vee b_1\vee a_2\vee b_2}\,
\text{ for }i\in\set{1,2}
\label{prdirT}
\end{equation}
implies that $\princ L$ is always a directed ordered set with zero. Therefore, the first part of the theorem will easily be concluded from the second one. 
To compare part \eqref{thmmainb} of our theorem to \init{G.\ }Gr\"atzer's result, note 
that a  bounded ordered set $P$ 
is always a union of a (one-element) chain of principal  ideals. Of course, no \emph{bounded} lattice $L$ can represent $P$ by $P\cong\princ L$ if $P$ has no greatest element. 

\subsection{Method}
First of all, we need the key idea, illustrated by Figure~\ref{fig4}, from \init{G.\ }Gr\"atzer~\cite{ggprincl}.

Second, we feel that without the quasi-coloring technique developed in \init{G.\ }Cz\'edli~\cite{czgrepres}, the investigations leading to this paper would have not even begun. 
As opposed to colorings, the advantage of quasi-colorings is that we have joins (equivalently, the possibility of generation) in their range sets. 
This allows us to decompose our construction into  a sequence of elementary steps. Each step is accompanied by a quasiordering. If several steps, possibly infinitely many steps, are carried out, then the join
of the corresponding quasiorderings gives a satisfactory insight into the construction.
Even if it is  the ``coloring versions'' of some lemmas that we only use at the end, it is worth allowing their quasi-coloring versions since this way the proofs are simpler and the lemmas become more general.

Third, the idea of using appropriate auxiliary structures is taken from \init{G.\ }Cz\'edli~\cite{112gen}. Their role is to accumulate all the assumptions our induction steps will need.

\section{Auxiliary statements and structures}
The rest of the paper is devoted to the proof of Theorem~\ref{thmmain}.
\subsection{Quasi-colorings and auxiliary structures}
A \emph{quasiordered set} is a structure $\tuple{H;\nu}$ where $H\neq \emptyset$ is a set and $\nu\subseteq H^2$ is a reflexive, transitive relation on $H$. Quasiordered sets are also called preordered ones. Instead of $\pair x y\in \nu$, we usually write $x \leqnu y$. Also,
we write  $x\lessnu y$ and  $x\parallelnu y$ for the conjunction 
of $x\leqnu y$ and $y\not\leqnu x$, and that of $\pair x y\notin\nu$ and $\pair yx\notin \nu$, respectively. If $g\in H$ and $x\leqnu g$ for all $x\in H$, then $g$ is a \emph{greatest element} of $H$; \emph{least elements} are defined dually. They are not necessarily unique; if they are, then they are denoted by $1_H$ and $0_H$.
If for all $x,y\in H$, there exists a $z\in H$ such that $x\leqnu z$ and $y\leqnu z$, then $\tuple{H;\nu}$ is a \emph{directed} quasiordered set.
Given $H\neq \emptyset$, the set of all quasiorderings on $H$ is denoted by $\Quord H$. It is a complete lattice with respect to set inclusion. For $X\subseteq H^2$, the least quasiorder on $H$ that includes $X$ is denotes by $\quos X$. We write $\quo xy$ instead of $\quos{\set{\pair ab}}$.

\begin{figure}[ht]
\centerline
{\includegraphics[scale=1.0]{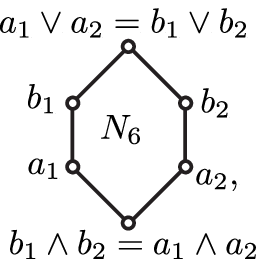}}
\caption{The lattice $N_6$ \label{fig1}}
\end{figure}

Let $L$ be a lattice. For $x,y\in L$, $\pair xy$ is called an \emph{ordered pair} of $L$ if $x\leq y$.  The set of ordered pairs of $L$ is denoted by $\pairs L$. Note that we shall often use that $\pairs S\subseteq \pairs L$ holds for sublattices $S$ of $L$; this explains why we work with ordered pairs rather than intervals. Note also that $\pair ab$ is an ordered pair if{f} $b/a$ is a quotient; however, the concept of ordered pairs fits better to previous work with quasi-colorings.

By a \emph{quasi-colored lattice} we mean a structure $\alg L=\tuple{L;\gamma,H,\nu}$ where $L$ is a lattice, $\tuple{H;\nu}$ is a quasiordered set, $\gamma\colon \pairs L\to H$ is a surjective map, and for all $\pair{u_1}{v_1},\pair{u_2}{v_2}\in \pairs L$, 
\begin{itemize}
\item[\labqa]if $\gamma(\pair{u_1}{v_1})\leqnu \gamma(\pair{u_2}{v_2})$, then $\cg{u_1}{v_1}\leq \cg{u_2}{v_2}$; 
\item[\labqb]if $\cg{u_1}{v_1}\leq \cg{u_2}{v_2}$, then $\gamma(\pair{u_1}{v_1})\leqnu \gamma(\pair{u_2}{v_2})$.
\end{itemize}
This concept is taken from \init{G.\ }Cz\'edli~\cite{czgrepres}. Prior to \cite{czgrepres},
the name ``coloring" was used for surjective maps onto antichains satisfying \labqb{} in \init{G.\ }Gr\"atzer, \init{H.\ }Lakser, and \init{E.T.\ }Schmidt~\cite{grlaksersch}, and for surjective maps onto antichains satisfying \labqa{} in \init{G.\ }Gr\"atzer~\cite[page
39]{grbypict}. However, in \cite{czgrepres}, \cite{grlaksersch}, and  \cite{grbypict}, $\gamma(\pair uv)$ was defined only for covering pairs $u\prec v$. 
To emphasize that $\cg{u_1}{v_1}$ and $\cg{u_2}{v_2}$ belong to the ordered set $\princ L$, we usually write $\cg{u_1}{v_1}\leq \cg{u_2}{v_2}$ rather than $\cg{u_1}{v_1}\subseteq \cg{u_2}{v_2}$.
It follows easily from \labqa{}, \labqb{}, and the surjectivity of $\gamma$ that if $\tuple{L;\gamma,H,\nu}$ is a quasi-colored set, then 
$\tuple{H;\nu}$ is a directed quasiordered set with least element; possibly with many least elements.

We say that a quadruple $\tuple{a_1,b_1,a_2,b_2}\in L^4$ is an \emph{$N_6$-quadruple} of $L$ if 
\[\set{b_1\wedge b_2=a_1\wedge a_2,\,\, a_1<b_1,\,\,a_2<b_2,\,\, a_1\vee a_2=b_1\vee b_2}
\] 
is a six-element sublattice, see Figure~\ref{fig1}. If, in addition, $b_1\wedge b_2=0_L$ and $a_1\vee a_2=1_L$, then we speak of a \emph{spanning $N_6$-quadruple}. 
An $N_6$-quadruple of $L$ is called a \emph{strong $N_6$-quadruple} if it is a spanning one  and, for all $i\in\set{1,2}$ and $x\in L$, 
\begin{align}
0_L < x \leq b_i &\Then x\vee a_{3-i}=1_L, \text{ and} \label{labsa}\\
1_L>x \geq a_i&\Then x\wedge b_{3-i}=0_L\text.\label{labsb}
%
\end{align}
For a subset $X$ of $L^2$, the least lattice congruence including $X$ is denoted by $\congen X$. In particular, $\congen{\set{\pair ab}}=\cg ab$. The least and the largest congruence of $L$ are denoted by $\Delta_L$ and $\nablaell L$, respectively.

Now, we are in the position to define the key concept we need. 
In the present paper, by a \emph{auxiliary structure} we mean a structure
\begin{equation}
\alg L=\tuple{L;\gamma, H,\nu,\delta ,\epsilon }\label{auxstr}
\end{equation}
such that the following eight properties hold:
\begin{itemize}
\item[\labaxa] $\tuple{L;\gamma, H,\nu}$ is a quasi-colored lattice;
\item[\labaxh] the quasiordered set $\tuple{H;\nu}$ has exactly one least element, $0_H$, and at most one greatest element;
\item[\labaxb] $\delta $ and $\epsilon $ are $H\to L$ maps such that $\delta (0_H)=\epsilon (0_H)$ and, for all $x\in H\setminus\set{0_H}$, $\delta (x)\prec\epsilon (x)$; note that we often write $a_x$ and $b_x$ instead of $\delta (x)$ and $\epsilon (x)$, respectively;
\item[\labaxc] for all $p\in H$, $\gamma(\pair{\delta (p)}{\epsilon (p)})=p$;
\item[\labaxd] if $p$ and $q$ are distinct elements of $H\setminus\set{0_H}$, then $\tuple{\delta (p),\epsilon (p), \delta (q),\epsilon (q)}$ is an $N_6$-quadruple of $L$;
\item[\labaxe] if $p,q\in H$, $p\parallelnu q$, and  $\tuple{\delta (p),\epsilon (p), \delta (q),\epsilon (q)}$ is a spanning $N_6$-quadruple, then it is a strong $N_6$-quadruple of $L$;
\item[\labaxf] If $L$ is a bounded lattice and $|L|>1$, then 
\begin{align*} 
\bigl|\bigl\{x\in L:{} &0_L\prec x\prec 1_L\text{ and, for all elements }y \text{ in }\cr
&L\setminus\{0_L,1_L,x\},\,\, x\text{ is a complement of }y\bigr\}\bigr|\geq 3;
\end{align*}
\item[\labaxg] if $1_H\in H$ and  $|L|>1$, then  $\conbiggen{ \set{\pair{\delta (p)} {\epsilon (p)}: p\in H\text{ and } p\neq 1_H }} \neq \nablaell L$.
\end{itemize}
It follows from \labaxd{} that $\set{\delta(x),\epsilon(x)}=\set {a_x, b_x}$ is disjoint from $\set{0_L,1_L}=\emptyset$, provided $|H|\geq 3$ and $x\in H\setminus\set{0_H}$. 

If $\tuple{H;\nu}$ is a quasiordered set, then $\Theta_\nu=\nu\cap\nu^{-1}$ is an equivalence relation, and the definition $\blokk x{\Theta_\nu}\leq \blokk y{\Theta_\nu}\iff x\leqnu y$  turns the quotient set $H/\Theta_\nu$ into an ordered set $\tuple{H/\Theta_\nu;\leq}$. The importance of our auxiliary structures is first shown by the following lemma.

\begin{lemma}\label{impclM} If $\alg L$ in \eqref{auxstr} is an auxiliary structure, then
the ordered set $\princ L$ is isomorphic to  $\tuple{H/\Theta_\nu;\leq}$. In particular, if $\nu$ is an  ordering, then $\princ L$ is isomorphic to the ordered set $\tuple{H;\nu}$.
\end{lemma}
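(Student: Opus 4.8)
The plan is to exhibit an explicit order isomorphism $\Phi\colon\tuple{H/\Theta_\nu;\leq}\to\princ L$; the ``in particular'' clause will then follow at once. Note that only property \labaxa{}, asserting that $\tuple{L;\gamma,H,\nu}$ is a quasi-colored lattice, is used; the other seven properties of an auxiliary structure are irrelevant for this lemma. Define $\Phi$ as follows: given a block $\blokk x{\Theta_\nu}\in H/\Theta_\nu$, pick any ordered pair $\pair uv\in\pairs L$ with $\gamma(\pair uv)=x$ --- this is possible since $\gamma$ is surjective --- and put $\Phi(\blokk x{\Theta_\nu}):=\cg uv$.

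First I would verify that $\Phi$ is well defined and, simultaneously, establish the order equivalence that will yield injectivity. Suppose $\pair{u_1}{v_1},\pair{u_2}{v_2}\in\pairs L$ and $\gamma(\pair{u_j}{v_j})=x_j$ for $j\in\set{1,2}$. By the definition of the order on $H/\Theta_\nu$ we have $\blokk{x_1}{\Theta_\nu}\leq\blokk{x_2}{\Theta_\nu}$ if and only if $x_1\leqnu x_2$. Now \labqa{} gives that $x_1\leqnu x_2$ implies $\cg{u_1}{v_1}\leq\cg{u_2}{v_2}$, and \labqb{} gives the reverse implication; hence
\[
\blokk{x_1}{\Theta_\nu}\leq\blokk{x_2}{\Theta_\nu}\iff \cg{u_1}{v_1}\leq\cg{u_2}{v_2}.
\]
Applying this with $x_1\mathrel{\Theta_\nu}x_2$, that is, with both $x_1\leqnu x_2$ and $x_2\leqnu x_1$, we obtain $\cg{u_1}{v_1}\leq\cg{u_2}{v_2}\leq\cg{u_1}{v_1}$, so $\cg{u_1}{v_1}=\cg{u_2}{v_2}$; this shows that $\Phi$ does not depend on the chosen representatives and is well defined. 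The displayed equivalence says precisely that $\Phi$ is order-preserving and order-reflecting, and an order-reflecting map is injective.

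It remains to check that $\Phi$ is surjective. Let $\Theta\in\princ L$, say $\Theta=\congen{\set{\pair ab}}=\cg ab$ for some $\pair ab\in L^2$. Since $\cg ab=\cg{a\wedge b}{a\vee b}$ and $\pair{a\wedge b}{a\vee b}\in\pairs L$, we may assume $\pair ab\in\pairs L$; then $\Phi(\blokk{\gamma(\pair ab)}{\Theta_\nu})=\cg ab=\Theta$. Hence $\Phi$ is an order isomorphism, which proves the first assertion. For the ``in particular'' part: if $\nu$ is an ordering, then it is antisymmetric, so $\Theta_\nu=\nu\cap\nu^{-1}$ is the equality relation on $H$; consequently $\blokk x{\Theta_\nu}\mapsto x$ is an order isomorphism $\tuple{H/\Theta_\nu;\leq}\to\tuple{H;\nu}$, and composing it with $\Phi^{-1}$ gives $\princ L\cong\tuple{H;\nu}$.

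I do not expect a genuine obstacle: the lemma is a direct unpacking of \labqa{}, \labqb{}, and the surjectivity of $\gamma$, and it is really the payoff that makes the quasi-coloring machinery worth setting up. The only point needing a moment's care is the reduction in the surjectivity step from an arbitrary generating pair $\pair ab\in L^2$ to an ordered pair in $\pairs L$, which rests on the elementary identity $\cg ab=\cg{a\wedge b}{a\vee b}$.
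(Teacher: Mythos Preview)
Your proof is correct and follows essentially the same approach as the paper's: both exploit only \labaxa{} (that is, \labqa{}, \labqb{}, and the surjectivity of $\gamma$) to set up an order isomorphism between $\princ L$ and $H/\Theta_\nu$. The only cosmetic difference is the direction of the map---the paper defines $\phi\colon\princ L\to H/\Theta_\nu$ by $\cg xy\mapsto\blokk{\gamma(\pair xy)}{\Theta_\nu}$, while you construct its inverse $\Phi$; your version is in fact a bit more explicit about the ``in particular'' clause and the reduction to ordered pairs.
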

 
\begin{proof}
Clearly, $\princ L=\set{\cg xy: \pair xy\in\pairs L}$. 
Consider the map $\phi\colon \princ L\to H/\Theta_\nu$, defined by $\cg xy\mapsto \blokk{\gamma(\pair xy)}{\Theta_\nu}$.
If $\cg {x_1}{y_1}=\cg {x_2}{y_2}$, then  $\blokk{\gamma(\pair {x_1}{y_1})}{\Theta_\nu} = \blokk{\gamma(\pair {x_2}{y_2})}{\Theta_\nu}$ follows from \labqb{}. Hence, $\phi$ is a map. It is surjective since so is $\gamma$. Finally, it is bijective and an order isomorphism by \labqa{} and \labqb{}. 
\end{proof}

We say that an auxiliary structure $\alg L=\tuple{L;\gamma, H,\nu,\delta ,\epsilon }$ is \emph{countable} if $|L|\leq\aleph_0$ and $|H|\leq\aleph_0$.
Next, we give an example.

\begin{example}\label{exegy} Let $H$ be a set, finite or infinite, such that $0_H,1_H\in H$ and $|H|\geq 3$. Let us define  
$\nu=\bigquos{(\set{0_H}\times H) \cup (H\times \set{1_H})}$; note that $\tuple{H;\nu}$ is an ordered set (actually, a modular lattice of length 2). Let $L$ be the lattice depicted in Figure~\ref{fig2}, where
$\set{h,g,p,q,\dots}$ is the  set
 $H\setminus\set{0_H,1_H}$. For $x\prec y$, $\gamma(\pair xy)$ is defined by the labeling of edges. Note that, in Figure~\ref{fig2}, we often write $0$ and $1$ rather than $0_H$ and $1_H$, because of space consideration. Let $\gamma(\pair xx)=0_H$ for $x\in L$, and let  $\gamma(\pair xy)=1_H$ for $x<y$ if $x\not\prec y$. Let $\delta (0_H)=\epsilon (0_H)=x_0$. For $s\in H\setminus\set{0_H}$, we define $\delta (s)=a_s$ and $\epsilon (s)=b_s$. 
Now, obviously, $\alg L=\tuple{L;\gamma, H,\nu,\delta ,\epsilon }$ is an auxiliary structure. If $|H|\leq \aleph_0$, then $\alg L$ is countable.
\end{example}

\begin{figure}[ht]
\centerline
{\includegraphics[scale=1.0]{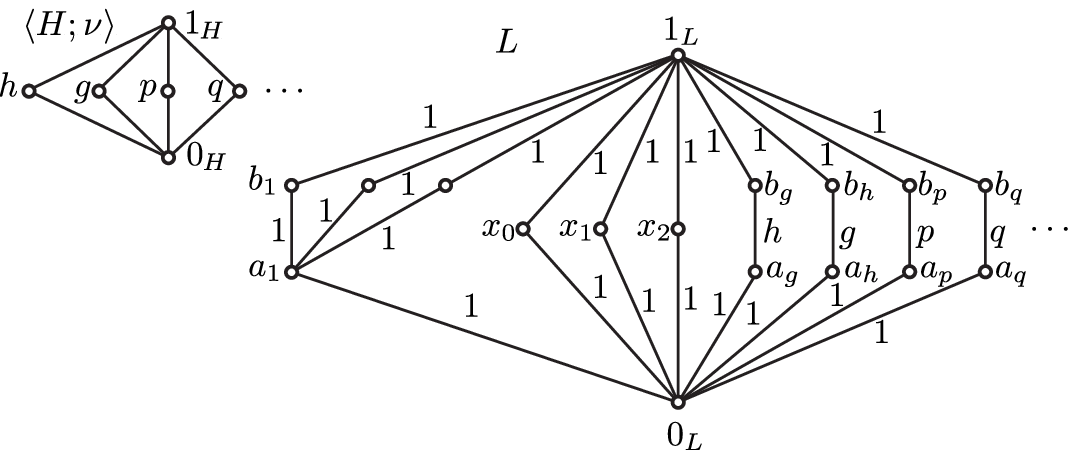}}
\caption{The auxiliary structure in Example~\ref{exegy} \label{fig2}}
\end{figure}

Substructures are defined in the natural way;  note that $\nu=\nu'\cap H^2$ will not be required below. Namely,

\begin{definition}\label{defbeagy}
Let 
$\alg L=\tuple{L;\gamma, H,\nu,\delta ,\epsilon }$ and $\alg L'=\tuple{L';\gamma', H',\nu',\delta ',\epsilon '}$ be auxiliary structures. We say that $\alg L$ is a \emph{substructure} of $\alg L'$ if the following hold:
\begin{enumeratei}
\item $L$ is a sublattice of $L'$, $H\subseteq H'$,  $\nu\subseteq\nu'$, and $0_{H'}=0_H$;
\item $\gamma$ is the restriction of $\gamma'$ to $\pairs L$,   $\delta$ is the restriction of $\delta'$ to $H$, and $\epsilon$ is the restriction of $\epsilon'$ to  $H$.
\end{enumeratei}
\end{definition}

Clearly, if $\alg L$,  $\alg L'$, and  $\alg L''$ are auxiliary structures such that $\alg L$ is a substructure of  $\alg L'$ and $\alg L'$ is a substructure of  $\alg L''$, then $\alg L$ is a substructure of  $\alg L''$; this fact will be used implicitly. 
The following lemma indicates how easily but efficiently we can work with auxiliary structures.

For an auxiliary structure $\alg L=\tuple{L;\gamma, H,\nu,\delta ,\epsilon }$ and an arbitrary (possibly empty) set $K$, we define the following objects. Let $\vcs H$ be the disjoint union $H\cup K\cup\set{1_{\vcs H}}$, and let $0_{\vcs H}=0_H$. Define $\vcs\nu\in\Quord{\vcs H}$ 
by \[\vcs\nu=\bigquos{\nu \cup(\set{0_{\vcs H}  } \times \vcs H )  \cup  (\vcs H\times\set{  1_{\vcs H}}) }\text. 
\]
Consider the lattice $\vcs L$ defined by Figure~\ref{fig3}, where $u,v,\dots$ denote the elements of $K$. The thick dotted lines indicate $\leq$ but not necessarily $\prec$; they are edges only if $L$ is bounded.
 Note that all  ``new'' lattice elements   distinct from $0_{\vcs L}$ and $1_{\vcs L}$, that is, all elements of $\vcs L\setminus(L\cup \set{0_{\vcs L}, 1_{\vcs L}})$, are complements of all ``old'' elements. Extend $\delta$ and $\epsilon$ to  maps $\vcs\delta,\vcs\epsilon\colon \vcs H \to{\vcs L}$ 
by letting  $\vcs{\delta }(w)=a_w$ and $\vcs{\epsilon }(w)=b_w$ for $w\in K\cup\set{1_{\vcs H}}$. Define $\vcs\gamma\colon \pairs{\vcs L}\to\vcs H$ by
\[
\vcs\gamma(
\pair xy)=
\begin{cases}
\gamma(\pair xy),& \text{if } \pair xy\in\pairs L,\cr
w, &\text{if } x=a_w,\,\, y=b_w,\text{ and }w\in K,\cr
0_{\vcs H},&\text{if }x=y,\cr
1_{\vcs H},&\text{otherwise.}
\end{cases}
\]
By space consideration again, the edge label $1$ in Figure~\ref{fig3} stands for $1_{\vcs H}$. 
Finally, let $\vcs{\alg L}=\tuple{\vcs L;\vcs\gamma,\vcs H,\vcs\nu, \vcs{\delta },\vcs{\epsilon }}$. The  straightforward proof of the following lemma will be omitted.

\begin{lemma}\label{lupstp} If $\alg L$ is an auxiliary structure, then so is $\vcs{\alg L}$. Furthermore, $\alg L$ is a substructure of $\vcs{\alg L}$, and if $\alg L$ and $K$ are countable, then so is $\vcs{\alg L}$.
Moreover, if $p,q\in \vcs H$ such that  $\set{p,q}\not\subseteq H$ and $p\parallel_{\vcs\nu}q$, then $\tuple{\vcs{\delta }(p), \vcs{\epsilon }(p),\vcs{\delta }(q), \vcs{\epsilon }(q)  }$ is a  strong $N_6$-quadruple.
\end{lemma}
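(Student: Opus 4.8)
The plan is to verify the eight axioms \labaxa--\labaxg{} for $\vcs{\alg L}$ directly from the construction in Figure~\ref{fig3}, then check substructure-hood against Definition~\ref{defbeagy}, and finally handle the ``moreover'' clause about strong $N_6$-quadruples. First I would establish that $\vcs{\alg L}$ is a quasi-colored lattice: the only subtle point in \labqa{} and \labqb{} is to compute $\congen{\vcs L}$ on the relevant pairs. Because every new element of $\vcs L$ distinct from $0_{\vcs L},1_{\vcs L}$ is a complement of every old element (and the $a_w\prec b_w$ edges for $w\in K$ sit in parallel $N_6$-style copies), collapsing any $a_w$ with $b_w$ forces the congruence up to $\nablaell{\vcs L}$, which matches the fact that $w\lessnu 1_{\vcs H}$ is the only nontrivial relation above $w$ and $0_{\vcs H}\lessnu w$ the only one below; similarly $\cgi{\vcs L}{\delta(p)}{\epsilon(p)}$ for $p\in H$ is unchanged by passing from $L$ to $\vcs L$, because $\vcs L$ is obtained from $L$ by adding a bounded frame and complemented atoms/coatoms, which do not merge old congruence classes. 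This gives \labaxa{}; \labaxh{} is immediate since $0_{\vcs H}$ remains the unique least element and $1_{\vcs H}$ is by construction the unique greatest element of $\vcs\nu$.

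Next I would dispatch the bookkeeping axioms. \labaxb{} holds because $\vcs\delta,\vcs\epsilon$ extend $\delta,\epsilon$ and for each new $w$ we have $a_w\prec b_w$ by inspection of Figure~\ref{fig3} (and $\vcs\delta(0_{\vcs H})=\delta(0_H)=\epsilon(0_H)=\vcs\epsilon(0_{\vcs H})$); \labaxc{} holds by the definition of $\vcs\gamma$ on the middle two cases and by \labaxc{} for $\alg L$ on the first case. For \labaxd{}, given distinct $p,q\in\vcs H\setminus\set{0_{\vcs H}}$: if both lie in $H$ this is \labaxd{} for $\alg L$; otherwise at least one involves a new element, and the geometry of Figure~\ref{fig3} (new elements being complements of everything, so that $a_p\vee a_q=1_{\vcs L}$, $b_p\wedge b_q=0_{\vcs L}$, and $b_p\wedge b_q=a_p\wedge a_q$, $a_p\vee a_q=b_p\vee b_q$) exhibits the required six-element $N_6$ sublattice. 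This same computation shows the quadruple is in fact \emph{spanning} whenever a new element is involved, and then \labaxe{}, together with the ``moreover'' clause, reduces to checking that such spanning quadruples are \emph{strong}, i.e.\ the implications \eqref{labsa} and \eqref{labsb}: for $0_{\vcs L}<x\leq b_i$ one uses that $b_i$ is an atom or lies just above one in the new frame so the only candidate is $x=b_i$ itself, and $b_i\vee a_{3-i}=1_{\vcs L}$ since new elements are complements; dually for \eqref{labsb}. Axiom \labaxf{} is designed into Figure~\ref{fig3}: the frame contributes at least three elements each of which is a complement of every element outside $\set{0_{\vcs L},1_{\vcs L}}$ and themselves, so the cardinality bound $\geq 3$ holds; here I would point to the explicit new elements (including those coming from $K\cup\set{1_{\vcs H}}$ and the dedicated frame vertices).

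The last axiom \labaxg{} is the one I expect to be the main obstacle, because it asserts a \emph{negative} statement about a congruence: $\conbiggen{\set{\pair{\vcs\delta(p)}{\vcs\epsilon(p)}:p\in\vcs H,\ p\neq 1_{\vcs H}}}\neq\nablaell{\vcs L}$. One must show that collapsing all the ``non-top'' covering pairs still leaves $0_{\vcs L}$ and $1_{\vcs L}$ separated. The idea is that the generating pairs $\pair{a_p}{b_p}$ for $p\in H\setminus\set{0_H}$ generate, inside $\vcs L$, exactly the extension of $\congen{\set{\pair{\delta(p)}{\epsilon(p)}:p\in H,\ p\neq 1_H}}$ from $L$ (which is $\neq\nablaell L$ by \labaxg{} for $\alg L$, in the case $1_H\in H$; and if $1_H\notin H$ one instead uses that not all of $\pairs L$ is collapsed), and that the extra generators $\pair{a_u}{b_u}$ for $u\in K$ collapse only the corresponding small $N_6$-copies without reaching the global top — because each such $N_6$-copy shares with the rest of $\vcs L$ only the elements $0_{\vcs L}$ and $1_{\vcs L}$, and a nontrivial congruence on a single $N_6$ that collapses one side-atom to its cover does not collapse $0$ to $1$. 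Concretely I would exhibit a surjective homomorphism from $\vcs L$ onto a two-element (or small) lattice, or equivalently a congruence $\Phi$ with $\pair{0_{\vcs L}}{1_{\vcs L}}\notin\Phi$ that contains every generator, by using the known proper congruence on $L$ and extending it over the frame so that $0_{\vcs L}$ and $1_{\vcs L}$ remain in distinct blocks. Once \labaxg{} is in place, the substructure claim is a routine check of Definition~\ref{defbeagy}(i)--(ii) (sublattice, $H\subseteq\vcs H$, $\nu\subseteq\vcs\nu$, $0$ preserved, and $\vcs\gamma,\vcs\delta,\vcs\epsilon$ restrict correctly), and countability is clear since $\vcs H=H\cup K\cup\set{1_{\vcs H}}$ and $\vcs L$ adds only finitely many elements per element of $H\cup K$.
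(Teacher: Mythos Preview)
The paper actually omits this proof entirely, calling it ``straightforward,'' so your plan of direct axiom-by-axiom verification is exactly what the author intends and there is nothing structurally different to compare.

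That said, your sketch contains a slip that would derail \labqb{} if left standing. You write that for $w\in K$, collapsing $a_w$ with $b_w$ ``forces the congruence up to $\nablaell{\vcs L}$.'' This is false. In the construction of Figure~\ref{fig3}, the elements $a_w,b_w$ sit on a two-element chain strictly between $0_{\vcs L}$ and $1_{\vcs L}$, incomparable with every other nontrivial element; since any such element $z$ satisfies $a_w\vee z=b_w\vee z$ and $a_w\wedge z=b_w\wedge z$, the congruence $\cgi{\vcs L}{a_w}{b_w}$ has \emph{exactly one} nontrivial block, namely $\set{a_w,b_w}$. If it equalled $\nablaell{\vcs L}$, then \labqb{} together with $\vcs\gamma(\pair{0_{\vcs L}}{1_{\vcs L}})=1_{\vcs H}$ would give $1_{\vcs H}\leq_{\vcs\nu}w$, contradicting $w\neq 1_{\vcs H}$. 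You in fact use the correct picture later, in your \labaxg{} paragraph, when you say these generators ``collapse only the corresponding small copies''; the two passages are inconsistent, and it is the second one that is right.

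There is also a wrinkle in your \labaxg{} plan. You propose to invoke \labaxg{} for $\alg L$, but observe that if $1_H$ existed in $H$, it is no longer the greatest element of $\vcs H$ (that role is taken by the new $1_{\vcs H}$), so the pair $\pair{\delta(1_H)}{\epsilon(1_H)}$ is now among the generators you must absorb --- and inside $L$ it already generates $\nablaell L$. The clean fix is not to use \labaxg{} for $\alg L$ at all, but simply to exhibit the congruence on $\vcs L$ whose blocks are $\set{0_{\vcs L}}$, $\set{1_{\vcs L}}$, all of $L$ as one block, $\set{a_w,b_w}$ for each $w\in K$, and singletons elsewhere; one checks directly that this is a congruence, it contains every required generator, and it separates $0_{\vcs L}$ from $1_{\vcs L}$. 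With these two corrections your outline goes through.
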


Since new bottom and top elements are added, we say that $\vcs{\alg L}$ is obtained from $\alg L$ by a \emph{vertical extension}; this motivates the triangle aiming upwards in its notation.

\begin{figure}[ht]
\centerline
{\includegraphics[scale=1.0]{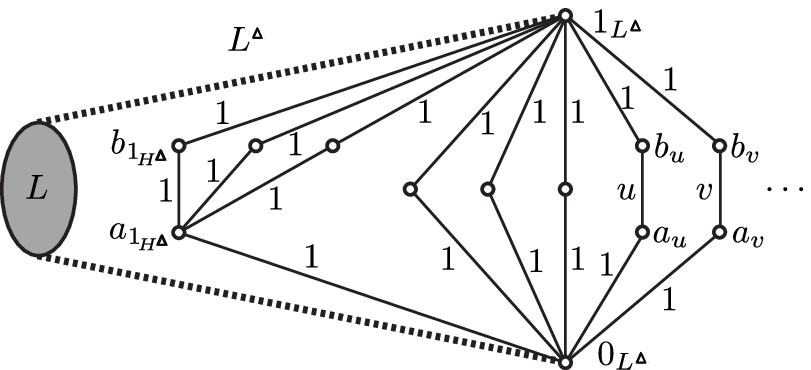}}
\caption{The auxiliary structure $\vcs{\alg L}$ \label{fig3}}
\end{figure}

\begin{figure}[ht]
\centerline
{\includegraphics[scale=1.0]{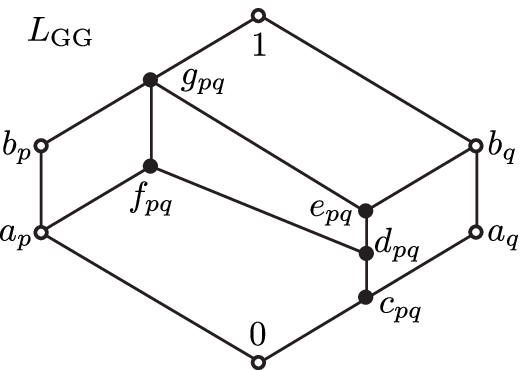}}
\caption{\init{G.\ }Gr\"atzer's lattice $\grlat$ \label{fig4}}
\end{figure}

\section{Horizontal extensions of auxiliary structures}
The key role in \init{G.\ }Gr\"atzer~\cite{ggprincl} is played by the lattice $\grlat$; see  Figure~\ref{fig4}. We also need this lattice. 
Assume that 
\begin{equation}
\begin{aligned}
\alg L&=\tuple{L;\gamma, H,\nu,\delta ,\epsilon }\text{ is an auxiliary structure, }p,q\in H\text{, } p\parallelnu q,\text{ and} \cr
&\tuple{a_p,b_p,a_q,b_q}=\tuple{\delta (p),\epsilon (p),\delta (q),\epsilon (q) } \text{ is a}\cr
&\text{spanning or, equivalently, a strong }N_6\text{-quadruple}.
\end{aligned}\label{asumLpar}
\end{equation}
The equivalence of ``spanning'' and ``strong'' in  \eqref{asumLpar} follows from \labaxe{}. We define a structure $\spa{\alg L}$ as follows, and it will take a lot of work to prove that it is an auxiliary structure. We call $\spa{\alg L}$ a \emph{horizontal extension} of $\alg L$; this explains the horizontal triangle in the notation.  
By changing the sublattice $\set{0_L,a_p,b_p,a_q,b_q,1_L}$ into an $\grlat$ as it is depicted in  Figure~\ref{fig4}, that is, by inserting the black-filled elements of Figure~\ref{fig4} into $L$, we obtain an ordered set denoted by $\spa L$; see also \eqref{spaorder} later for more exact details. (We will prove that $\spa L$ is a lattice and $L$ is a sublattice in it.) The construction of $\spa{\alg L}$ from  $\alg L$ is illustrated in  Figure~\ref{fig5}. Note that there can be much more elements and in a  more complicated way than indicated. 
The 
solid lines represent the covering relation but the 
dotted lines are not necessarily edges. The new lattice $\spa L$ is obtained from $L$ by inserting the black-filled elements. Note that while \init{G.\ }Gr\"atzer~\cite{ggprincl} constructed a lattice of length 5, here even the interval, say,  $[b_p, 1_L]$ can be of infinite length.

\begin{figure}[ht]
\centerline
{\includegraphics[scale=1.0]{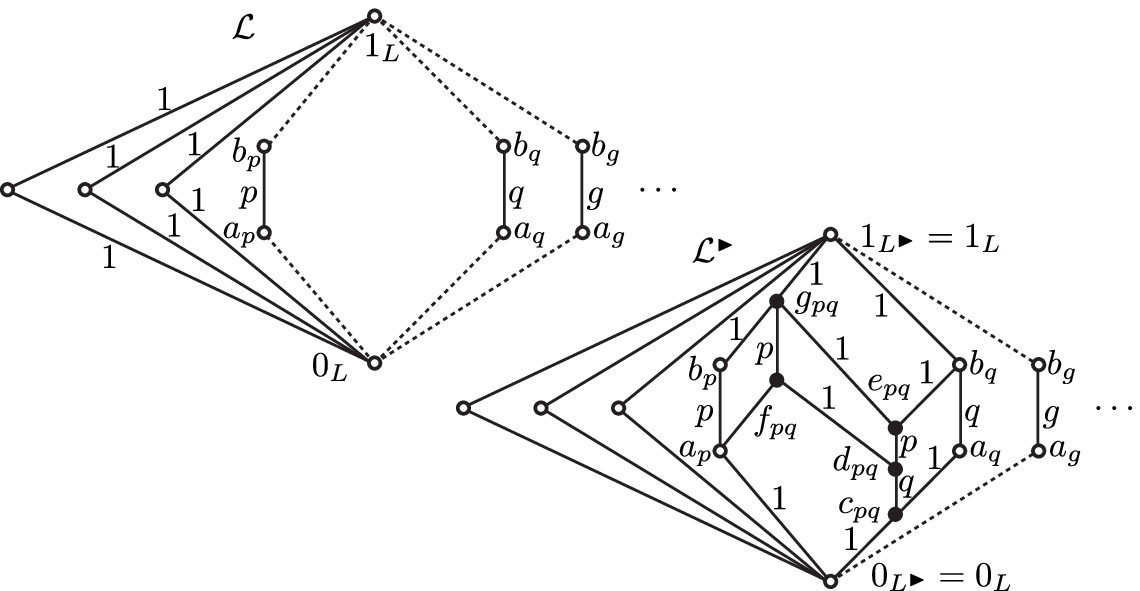}}
\caption{Obtaining $\spa{\alg L}$ from $\alg L$ \label{fig5}}
\end{figure}

Let $\spa H=H$. In $\Quord {\spa H}$, we define
$\spa\nu=\bigquos{\nu\cup\set{\pair pq}}$. We extend $\gamma$ to $\spa\gamma\colon \pairs{\spa L}\to \spa H$ by          
\[
\spa\gamma(\pair xy)=
\begin{cases}
\gamma(\pair xy),&\text{if }\pair xy\in\pairs L,\cr
p,&\text{if }\pair xy\in\set{\pair{d_{pq}}{e_{pq}}, \pair{f_{pq}}{g_{pq}}  },\cr
q,&\text{if }\pair xy\in\set{\pair{c_{pq}}{d_{pq}}, \pair{c_{pq}}{e_{pq}}  },\cr
0_{\spa H}, &\text{if } x=y,\cr
1_{\spa H}, &\text{otherwise.}
\end{cases}
\]
The definition of $\spa\gamma$ is also illustrated in Figure~\ref{fig5}, where the edge color $1$ stands for  $1_{\spa H}$. Finally, after letting $\spa{\delta }=\delta $ and $\spa{\epsilon }=\epsilon $, we define
\begin{equation} 
\spa{\alg L}=\tuple{\spa L;\spa\gamma, \spa H,\spa \nu,\spa{\delta },\spa{\epsilon }}\text.\label{spaldef}
\end{equation}

\begin{lemma}\label{spalislat}
If $\alg L$ satisfies \eqref{asumLpar}, then $\spa L$ is a lattice and $L$ is a sublattice of $\spa L$. 
\end{lemma}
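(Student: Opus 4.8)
The plan is to give an explicit order-theoretic description of $\spa L$ as a set with an order relation, and then to verify directly that binary meets and joins exist and that they restrict correctly on $L$. First I would fix notation for the six inserted black-filled elements $c_{pq}, d_{pq}, e_{pq}, f_{pq}, g_{pq}$ (and whatever else Figure~\ref{fig4} prescribes), together with the covering data forced by the picture: the new elements sit between $0_L$ and $1_L$, $c_{pq}$ is below both $d_{pq}$ and $e_{pq}$, these in turn relate to $a_p,b_p,a_q,b_q$ as the $\grlat$ diagram dictates, and everything else in $L$ above $b_p\vee b_q=1_L$ or below $a_p\wedge a_q=0_L$ is unaffected. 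Concretely I would \emph{define} $\spa L$ as the set $L\cup\set{c_{pq},d_{pq},e_{pq},f_{pq},g_{pq}}$ ordered by the transitive closure of $\leq_L$ together with the covering pairs read off Figure~\ref{fig4}; call this relation $\leq$, cf.\ the promised equation \eqref{spaorder}.

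The first substantive step is to check that this relation $\leq$ is genuinely a partial order, i.e.\ that the transitive closure introduces no cycles. This is where the hypothesis \eqref{asumLpar} enters in its ``strong $N_6$'' form: the strongness conditions \eqref{labsa} and \eqref{labsb} guarantee that no old element $x\in L\setminus\set{0_L,1_L}$ lies below $b_p$ or $b_q$ in a way that would let a new path from a new element back down into $L$ and then back up create a cycle; in other words, the local picture around the inserted $\grlat$ does not interact with the rest of $L$ except through $0_L,1_L,a_p,b_p,a_q,b_q$. I would phrase this as: for the new elements, the only old elements below them are those $\leq a_p$ or $\leq a_q$ (together with $0_L$), and the only old elements above them are those $\geq b_p$ or $\geq b_q$ (together with $1_L$); and since $a_p<b_p$, $a_q<b_q$, $b_p\wedge b_q=0_L$, $a_p\vee a_q=1_L$, these two sets meet only in $\set{0_L,1_L}$ and are separated. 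Antisymmetry and reflexivity then follow.

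Next I would establish existence of meets and joins, doing the genuinely new cases and appealing to $L$ being a lattice for the rest. For $x,y\in L$ the claim is that $x\vee_{\spa L}y=x\vee_L y$ and $x\wedge_{\spa L}y=x\wedge_L y$: one checks that no newly inserted element can be an upper bound of $\set{x,y}$ strictly below $x\vee_L y$, which again uses that new elements have only $1_L$ and elements $\geq b_p$ or $\geq b_q$ above them, so any new common upper bound already forces $x,y\leq b_p$ or $x,y\leq b_q$, and then strongness and the $N_6$ relations pin the join down; dually for meets. This simultaneously proves that $L$ is a sublattice of $\spa L$. For the cases where at least one of $x,y$ is new, there are only finitely many combinations among $c_{pq},d_{pq},e_{pq},f_{pq},g_{pq}$ and their interactions with $a_p,b_p,a_q,b_q,0_L,1_L$ and with a general old element; each is a short case analysis reading the bounds off Figure~\ref{fig4}, and since $\grlat$ itself is a lattice the purely-new subcases are immediate.

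\textbf{Main obstacle.} I expect the hard part to be the first step: proving that the transitive closure defining $\leq$ is a partial order, equivalently that the inserted $\grlat$-gadget is ``order-convex'' inside $\spa L$ and communicates with $L$ only at $0_L,a_p,a_q,b_p,b_q,1_L$. This is exactly the point at which the strong-$N_6$ hypotheses \eqref{labsa}--\eqref{labsb} are indispensable, and getting the bookkeeping right---characterizing precisely $\mathord\downarrow x$ and $\mathord\uparrow x$ in $\spa L$ for each new $x$---is where the real work lies; once that structural lemma is in hand, the lattice axioms and the sublattice claim are routine finite case checks.
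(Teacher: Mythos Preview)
Your overall architecture---define the order explicitly, verify it is a partial order, then compute joins and meets by cases---is exactly what the paper does. But you have located the difficulty in the wrong place, and this mislocates where the strong-$N_6$ hypothesis \eqref{labsa}--\eqref{labsb} actually enters.

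Antisymmetry of the order on $\spa L$ is \emph{not} the hard step and does \emph{not} use strongness. Once the order is written as in \eqref{spaorder}, one simply observes that if $u_1,u_3\in B_6^{pq}$ and $u_2\in N_6^{pq}$ with $u_1\leq u_2\leq u_3$, then $\set{0_L,1_L}\cap\set{u_1,u_3}\neq\emptyset$; transitivity and antisymmetry follow immediately from this and from $\leq_L$, $\leq_{S_6^{pq}}$ already being orders. No appeal to \eqref{labsa}--\eqref{labsb} is needed here. Likewise, your sublattice argument for old $x,y\in L$ overcomplicates things: if a new element $z$ is a common upper bound of $x,y$, then so is the boundary element $\acs z\in L$ with $\acs z\leq z$, hence $x\vee_L y\leq \acs z\leq z$; strongness plays no role.

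The genuine work, and the only place \eqref{labsa}--\eqref{labsb} are used, is in the \emph{mixed} join case: $x\in L$, $y\in N_6^{pq}$, with a common upper bound in $N_6^{pq}$. Here one proposes $\widehat x\vee_{S_6^{pq}} y$ as the join (where $\widehat x$ is the least element of $\filter x\cap B_6^{pq}$), and must rule out a smaller old upper bound $t\in L$. Since $y\geq a_q$ forces $t\geq a_q$, and $0_L<x\leq b_p$, the strong-$N_6$ condition \eqref{labsa} gives $x\vee_L a_q=1_L$, whence $t=1_L$. This is the step your plan glosses over as ``a short case analysis,'' but it is precisely where the hypothesis earns its keep; the paper's formula \eqref{joindeR} records the outcome.
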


\begin{proof} First, we describe the ordering of $\spa L$ more precisely; this description is the real  definition of $\spa L$. Let  
\begin{equation}
\begin{aligned}
N_6^{pq}&=\spa L\setminus L=\set{c_{pq},d_{pq},e_{pq},f_{pq},g_{pq}},\cr 
B_6^{pq}&=\set{0_L, a_p,b_p,a_q,b_q,1_L }\text{, and}\cr
S_6^{pq}&=\{0_L, a_p,b_p,a_q,b_q,c_{pq},d_{pq},e_{pq},f_{pq},g_{pq}, 1_L\} = N_6^{pq}\cup B_6^{pq}\text{.} \cr
\end{aligned}\label{sodiHGrj}
\end{equation}
Here $S_6^{pq}$ is isomorphic to the lattice $\grlat$, and its ``boundary'', $B_6^{pq}$, to $N_6$. The elements of $L$, $N_6^{pq}$, and $B_6^{pq}$ are called \emph{old}, \emph{new}, and \emph{boundary} elements, respectively. For $x,y\in \spa L$, we define $x\leq_{\spa L} y\iff$
\begin{equation}
\begin{cases}
x\leq_L y ,&\text{if }x,y\in L\text{, or}\cr
x\leq_{S_6^{pq}} y ,&\text{if }x,y\in S_6^{pq}\text{, or}\cr
\exists z\in  B_6^{pq}:
x\leq_L z\text{ and }z\leq_{S_6^{pq}}y,&\text{if }x\in L\setminus S_6^{pq}\text{ and } y\in N_6^{pq}\text{, or}\cr
\exists z\in  B_6^{pq}:
x\leq_{S_6^{pq}}z\text{ and }z\leq_L y,&\text{if }x\in N_6^{pq}\text{ and } y\in L\setminus S_6^{pq}\text.
\end{cases}
\label{spaorder}
\end{equation}
Observe that for $u_1,u_3\in B_6^{pq}$ and $u_2\in N_6^{pq}$, the conjunction of  $u_1\leq_{S_6^{pq}}u_2$ and $u_2 \leq_{S_6^{pq}} u_3$ implies
$\set{0_{\spa L},1_{\spa L}}\cap\set{u_1,u_3}\neq\emptyset$. Hence, it is straightforward to see that $\leq_{\spa L}$ is an ordering and $\leq_L$ is the restriction of $\leq_{\spa L}$ to $L$.

For $x\in N_6^{pq}$, there is a unique least element $\fcs x$ of $B_6^{pq}$ such that $x\leq_{S_6^{pq}}\fcs x$ (that is, $x\leq_{\spa L}\fcs x$). If $x\in L$, then we let $\fcs x=x$.  Similarly, for $x\in N_6^{pq}$, there is a unique largest element $\acs x$ of $B_6^{pq}$ such that 
$\acs x\leq_{S_6^{pq}} x$. Again, for $x\in L$, we let $\acs x=x$.  With this notation,  \eqref{spaorder} is clearly equivalent to
\begin{equation}
x\leq_{\spa L} y\iff
\begin{cases}
x\leq_L y,&\text{if }x,y\in L\text{, or}\cr
x\leq_{S_6^{pq}} y,&\text{if }x,y\in S_6^{pq}\text{, or}\cr
x\leq_L \acs y,&\text{if }x\in L\setminus S_6^{pq}\text{ and } y\in N_6^{pq}\text{, or}\cr
\fcs x\leq_L y,&\text{if }x\in N_6^{pq}\text{ and } y\in L\setminus S_6^{pq}\text.
\end{cases}\label{spanorder}
\end{equation}


Next, for  $x\parallel y\in\spa L$, we want to show that $x$ and $y$ has a join in $\spa L$. We can assume that $\set{x,y}$ has an upper bound $z$ in $N_6^{pq}$, because otherwise $\fcs x\vee_L \fcs y$ would clearly be the join of $x$ and $y$ in $\spa L$. If  $z$ belonged to $\set{c_{pq},d_{pq},e_{pq}}$, then the principal ideal $\ideal z$ (taken in $\spa L$) would be a chain, and this would  contradict $x\parallel y$. Hence, $z\in \set{f_{pq},g_{pq}}$. If both $x$ and $y$ belong to $N_6^{pq}$, then $x\parallel y$ gives $\set{x,y}=\set{e_{pq},f_{pq}}$, $z$ and $1_{\spa L}$ are the only upper bounds of $\set{x,y}$, and $z$ is the join of $x$ and $y$. Hence, we can assume that $x\in L$. 
If $y$ also belongs to $L$, then $x\leq\acs z$ and $y\leq \acs z$ yields 
$x\vee_L y\leq_{\spa L} \acs z\leq_{\spa L} z$, and $x\vee_L y$ is the join of $x$ and $y$ in $\spa L$ since $z$ was an arbitrary upper bound of $\set{x,y}$ in  $N_6^{pq}$. 

Therefore, we can assume that $x\in L$ and $y\in N_6^{pq}$. 
It follows from $b_p\wedge_L b_q=0_L$ that, for each $u\in L$, $\filter u\cap B_6^{pq}$ has a smallest element; we denote it by $\widehat u$. For $u\in N_6^{pq}$, we let $\widehat u=u$. 
Note that, for every $u\in \spa L$,  $\widehat u$ is the smallest element of $\filter u\cap S_6^{pq}$. The  existence of $z$, mentioned above, implies that $\widehat x\in  \set{a_p,b_p}$.

We assert that $\widehat x\vee_{S_6^{pq}} y= \widehat x\vee_{S_6^{pq}}\widehat y$ is the join of $x$ and $y$ in $\spa L$.   (Note that $\widehat x\vee_{S_6^{pq}} y\subseteq \set{f_{pq},g_{pq}}$.)
We can assume $y\in\set{c_{pq},d_{pq},e_{pq}}$ since otherwise $1_L$ is the only upper bound of $y$ in $L$ and  $x\vee_{\spa L}y=\widehat x\vee_{S_6^{pq}} y$ is clear. Consider an upper bound $t\in L$ of $x$ and $y$. 
Since $y\in\set{c_{pq},d_{pq},e_{pq}}$, we have $a_q\leq t$ and  $x\vee_L a_q\leq t$.  From  
$x\parallel y\in\spa L$ and $\widehat x\in  \set{a_p,b_p}$, we obtain $0_L<x\leq b_p$.
Since $\tuple{a_p,b_p,a_q,b_q}$ is a 
strong $N_6$-quadruple by \eqref{asumLpar}, 
the validity of \labaxe{} for $\alg L$ implies $\widehat x\vee_{S_6^{pq}} y\,\leq 1_{\spa L}=1_L=x\vee_L a_q\leq t$.
This shows that $\widehat x\vee_{S_6^{pq}} y$ is the join of $x$ and $y$ in $\spa L$. 
The case $x,y\in L$ showed that $\tuple{L;\vee}$ is a subsemilattice of $\tuple{\spa L;\vee}$.
For later reference, we summarize the description of join in a concise form as follows; note that $x\parallel y$ is not assumed here:
\begin{equation}
x\vee_{\spa L}y=\begin{cases}
\fcs x\vee_L \fcs y, &\text{if } \set{x,y}\not\subseteq \ideal{g_{pq}}\text{ or }\set{x,y}\subseteq L , \cr
\widehat x\vee_{S_6^{pq}} \widehat y&\text{otherwise, that is, if } \set{x,y}\subseteq \ideal{g_{pq}}\text{ and }\set{x,y}\not\subseteq L\text.
\end{cases}\label{joindeR}
\end{equation}

We have shown that any two elements of $\spa L$ have a join. Although $S_6^{pq}$ and the construction of $\spa L$ are not exactly selfdual, by interchanging the role of $\set{f_{pq},g_{pq}}$ and that of $\set{c_{pq},d_{pq},e_{pq}}$, we can easily dualize the argument above. Thus,  we conclude that $\spa L$ is a lattice and $L$ a sublattice of $\spa L$.
\end{proof}

The following lemma is due to \init{R.\,P.\ }Dilworth~\cite{dilworth1950a}, see also  \init{G. }Gr\"atzer~\cite[Theorem III.1.2]{grglt}.

\begin{lemma}\label{llajtorja} If $L$ is a  lattice and  $\pair{u_1}{v_1},\pair{u_2}{v_2}\in\pairs L$, then the following three conditions are equivalent.
\begin{enumeratei}
\item $\cg{u_1}{v_1}\leq \cg{u_2}{v_2}$; 
\item $\pair{u_1}{v_1}\in \cg{u_2}{v_2}$;
\item there exists an $n\in\mathbb N$ and there are $x_{i} \in L$ for $i\in\set{0,\dots,n}$ and $\pair{y_{i j}}{z_{i j}} \in\pairs L$ for 
$\pair ij\in\set{1,\dots,n}\times\set{0,\dots,n}$ such that the following equalities and inequalities hold:
\begin{equation}
\begin{aligned}
u_1&=x_{0}\leq x_{1}\leq\dots\leq x_{n-1}\leq x_{n}=v_1\cr
y_{i0} &=x_{i-1}\text{,  }y_{in}=u_2\text{,  }z_{i0}=x_i\text{, and }z_{in}=v_2\text{ for }1\leq i\leq n,\cr  
y_{i,j-1}&= z_{i,j-1}\wedge y_{ij}
\text{ and } z_{i,j-1}\leq z_{ij}
\text{ for }j \text{ odd, }  i,j\in\set{1,\dots,n},\cr
z_{i,j-1} & = y_{i,j-1}\vee z_{ij} \text{ and }y_{i,j-1}\geq y_{ij}\text{ for }j \text{ even, }  i,j\in\set{1,\dots,n}\text.
\end{aligned}\label{lajtorjaformula}
\end{equation}
\end{enumeratei}
\end{lemma}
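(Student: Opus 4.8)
The plan is to establish (i)$\Leftrightarrow$(ii) essentially for free and to concentrate on (ii)$\Leftrightarrow$(iii). For (i)$\Leftrightarrow$(ii): $\cg{u_2}{v_2}$ is a congruence of $L$ and $\cg{u_1}{v_1}$ is the least congruence collapsing $u_1$ and $v_1$, so $\cg{u_1}{v_1}\le\cg{u_2}{v_2}$ holds precisely when the generating pair $\pair{u_1}{v_1}$ lies in $\cg{u_2}{v_2}$.

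For (iii)$\Rightarrow$(ii) I would fix $i\in\set{1,\dots,n}$ and prove by \emph{downward} induction on $j$, from $j=n$ down to $j=0$, that $\pair{y_{ij}}{z_{ij}}\in\cg{u_2}{v_2}$. The base case $j=n$ is the tautology $\pair{u_2}{v_2}\in\cg{u_2}{v_2}$. In the step, when $j$ is odd one meets the relation $\pair{y_{ij}}{z_{ij}}\in\cg{u_2}{v_2}$ with $z_{i,j-1}$ and uses $y_{i,j-1}=z_{i,j-1}\wedge y_{ij}$ together with $z_{i,j-1}\le z_{ij}$; when $j$ is even one joins with $y_{i,j-1}$ and uses $z_{i,j-1}=y_{i,j-1}\vee z_{ij}$ together with $y_{i,j-1}\ge y_{ij}$. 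Either way the substitution property of congruences yields $\pair{y_{i,j-1}}{z_{i,j-1}}\in\cg{u_2}{v_2}$. Taking $j=0$ gives $\pair{x_{i-1}}{x_i}\in\cg{u_2}{v_2}$ for every $i$, and transitivity along $u_1=x_0\le\dots\le x_n=v_1$ gives $\pair{u_1}{v_1}\in\cg{u_2}{v_2}$.

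The substantive direction is (ii)$\Rightarrow$(iii). I would call $\pair ab\in\pairs L$ \emph{weakly projective into} $\pair{u_2}{v_2}$ if there is a chain $\pair{y_j}{z_j}\in\pairs L$, $0\le j\le m$, with $\pair{y_0}{z_0}=\pair ab$ and $\pair{y_m}{z_m}=\pair{u_2}{v_2}$ obeying the odd/even recursions of \eqref{lajtorjaformula} between consecutive indices, and call $\pair ab\in L^2$ \emph{good} if $[a\wedge b,\ a\vee b]$ is a finite union of subintervals $a\wedge b=e_0\le\dots\le e_k=a\vee b$ with every $\pair{e_{\ell-1}}{e_\ell}$ weakly projective into $\pair{u_2}{v_2}$. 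Put $\Theta=\set{\pair ab\in L^2:\pair ab\text{ is good}}$. The steps would be: first, a subinterval of a pair that is weakly projective into $\pair{u_2}{v_2}$ is again weakly projective into it, and so is its image under meeting or joining with any fixed $t\in L$ — both by pulling the relevant subinterval back through the defining chain, using that each recursion step composes with the $\wedge$- and $\vee$-operations; second, from this $\Theta$ is seen to be reflexive, symmetric, closed under meets and joins with a fixed element, and — this is the crux — transitive, hence a congruence of $L$; third, $\Theta$ contains $\pair{u_2}{v_2}$ via the one-step chain, so $\cg{u_2}{v_2}\subseteq\Theta$ by minimality. Then (ii) forces $\pair{u_1}{v_1}$ to be good, and one rewrites that data in the uniform shape of \eqref{lajtorjaformula}: pad each chain to a common length using the ``identity step'' $\pair{y_{j-1}}{z_{j-1}}=\pair{y_j}{z_j}$ (which satisfies both parity constraints), and pad the list of subintervals with copies of $\pair{v_1}{v_1}$, each weakly projective into $\pair{u_2}{v_2}$ by a short chain.

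The part I expect to be the main obstacle is the transitivity of $\Theta$: the naive concatenation-of-subdivisions argument settles only the comparable case $a\le b\le c$, and extending it to arbitrary $a,b,c$ is the classical Dilworth juggling with the lattice operations — showing that if $[a\wedge b,a\vee b]$ and $[b\wedge c,b\vee c]$ are each chopped into intervals weakly projective into $\pair{u_2}{v_2}$, then so is $[a\wedge c,a\vee c]$. Once $\Theta=\cg{u_2}{v_2}$ is in hand, the closing length-normalization is routine bookkeeping.
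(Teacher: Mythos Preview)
The paper does not prove this lemma at all: it is stated with the attribution ``due to \init{R.\,P.\ }Dilworth~\cite{dilworth1950a}, see also \init{G.\ }Gr\"atzer~\cite[Theorem III.1.2]{grglt}'' and then used as a black box. Your outline is precisely the classical argument one finds in those references --- (i)$\Leftrightarrow$(ii) is immediate, (iii)$\Rightarrow$(ii) is the routine induction you describe, and (ii)$\Rightarrow$(iii) is obtained by showing that the relation $\Theta$ of ``good'' pairs is a congruence containing $\pair{u_2}{v_2}$, with transitivity being the only nontrivial point. Your identification of the transitivity step as the crux is exactly right, and your padding remarks (identity steps satisfy both parity constraints; the trivial pair $\pair{v_1}{v_1}$ reaches $\pair{u_2}{v_2}$ via a short chain through $\pair{v_1\vee v_2}{v_1\vee v_2}$, then padded) correctly handle the bookkeeping needed to get a uniform $n$.
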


The situation of  Lemma~\ref{llajtorja} is outlined in Figure~\ref{fig6}; note that not all elements are depicted, and the elements are not necessarily distinct. The second half of \eqref{lajtorjaformula} says that, in terms of \init{G.\ }Gr\"atzer~\cite{grglt}, $\pair{y_{i,j-1}}{z_{i,j-1}}$ is \emph{weakly} up or down perspective into $\pair{y_{ij}}{z_{ij}}$; up for $j$ odd and down for $j$ even. Besides weak perspectivity, we shall also need a more specific concept; recall that $\pair{x_1}{y_1}$ is \emph{perspective} to $\pair{x_2}{y_2}$ if there are $i,j\in\set{1,2}$ such that $i\neq j$, 
$x_i=y_i\wedge x_j$, and $y_j=x_j\vee y_i$.

\begin{figure}[ht]
\centerline
{\includegraphics[scale=1.0]{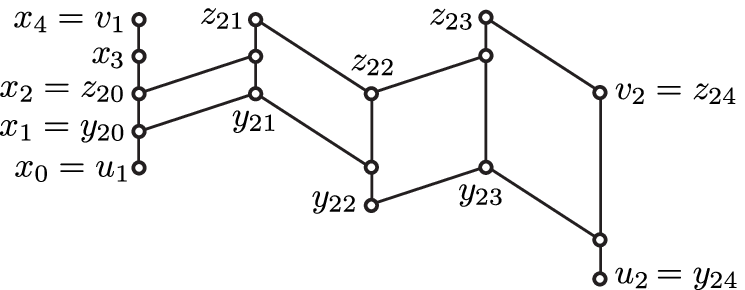}}
\caption{Illustrating Lemma~\ref{llajtorja} for $n=4$ \label{fig6}}
\end{figure}

For a quasiordered set $\tuple{H;\nu}$ and  $p, q_1,\dots,q_n\in H$, we say that $p$ is a \emph{join} of the elements $q_1,\dots,q_n$, in notation, $p=\bigvee_{i=1}^n q_i$, if $q_i\leqnu p$ for all $i$ and, for every $r\in H$, the conjunction of $q_i\leqnu r$ for $i=1,\dots,n$ implies $p\leqnu r$. This concept is used in the next lemma. Note that even if a join exists, it need not be unique. 

\begin{lemma}[``Chain Lemma'' for quasi-colored lattices]\label{chainlemma} If $\tuple{  L; \gamma,  H,\nu}$ is a quasi-colored lattice and $\set{u_0\leq u_1\leq\dots \leq u_n}$ is a finite chain in $L$, then 
\begin{equation}
\gamma(\pair{u_0}{u_n})=\bigvee_{i=1}^n \gamma(\pair{u_{i-1}}{u_i})\quad\text{ holds in }\tuple{H;\nu}\text.\label{lancjoin}
\end{equation}
\end{lemma}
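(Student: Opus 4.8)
The plan is to unwind the definition of a \emph{join} in the quasiordered set $\tuple{H;\nu}$ and to verify its two clauses directly from \labqa{}, \labqb{}, the surjectivity of $\gamma$, and one elementary congruence-theoretic fact about chains. That fact is: for a chain $u_0\leq u_1\leq\dots\leq u_n$ in any lattice $L$,
\begin{equation*}
\cg{u_0}{u_n}=\bigvee_{i=1}^n\cg{u_{i-1}}{u_i}\quad\text{in the congruence lattice of }L.
\end{equation*}
First I would prove this. The inequality ``$\geq$'' amounts to $\cg{u_{i-1}}{u_i}\leq\cg{u_0}{u_n}$ for each $i$: if a congruence $\Theta$ collapses $u_0$ and $u_n$, then from $u_0\leq u_{i-1}\leq u_n$ we get $u_{i-1}=u_{i-1}\wedge u_n \mathrel\Theta u_{i-1}\wedge u_0=u_0$, similarly $u_i\mathrel\Theta u_0$, hence $u_{i-1}\mathrel\Theta u_i$. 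The inequality ``$\leq$'' holds because $\bigvee_{i=1}^n\cg{u_{i-1}}{u_i}$ collapses every consecutive pair $u_{i-1}, u_i$ and therefore, by transitivity, collapses $u_0$ and $u_n$. (Alternatively one could invoke Lemma~\ref{llajtorja}, but the direct argument is shorter.)

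Next, for the upper-bound clause: by the containment $\cg{u_{i-1}}{u_i}\leq\cg{u_0}{u_n}$ just established and \labqb{}, we obtain $\gamma(\pair{u_{i-1}}{u_i})\leqnu\gamma(\pair{u_0}{u_n})$ for every $i\in\set{1,\dots,n}$. Thus $\gamma(\pair{u_0}{u_n})$ is an upper bound of $\set{\gamma(\pair{u_{i-1}}{u_i}):1\leq i\leq n}$ in $\tuple{H;\nu}$.

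For the minimality clause, let $r\in H$ be an arbitrary element with $\gamma(\pair{u_{i-1}}{u_i})\leqnu r$ for all $i$. Since $\gamma$ is surjective, choose $\pair ab\in\pairs L$ with $\gamma(\pair ab)=r$. Then \labqa{} gives $\cg{u_{i-1}}{u_i}\leq\cg ab$ for each $i$, whence $\cg{u_0}{u_n}=\bigvee_{i=1}^n\cg{u_{i-1}}{u_i}\leq\cg ab$ by the chain fact, and a further application of \labqb{} yields $\gamma(\pair{u_0}{u_n})\leqnu\gamma(\pair ab)=r$. This establishes \eqref{lancjoin}. The only point requiring care is the minimality clause, which must be tested against an \emph{arbitrary} $r\in H$ rather than against values of $\gamma$ on a fixed family of pairs; this is precisely what the surjectivity of $\gamma$ (pulling $r$ back to an ordered pair of $L$) together with \labqa{} is for, so no genuine obstacle arises.
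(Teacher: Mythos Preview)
Your proof is correct and follows essentially the same route as the paper's: both verify the upper-bound clause via $\cg{u_{i-1}}{u_i}\leq\cg{u_0}{u_n}$ together with \labqb{}, and the minimality clause by pulling an arbitrary $r\in H$ back to an ordered pair via surjectivity, applying \labqa{}, and then using transitivity of the resulting congruence. The only cosmetic difference is that you package the congruence-theoretic step as the full join identity $\cg{u_0}{u_n}=\bigvee_i\cg{u_{i-1}}{u_i}$ before invoking it, whereas the paper uses its two halves inline.
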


\begin{proof} Let $p=\gamma(\pair{u_0}{u_n})$ and $q_i=\gamma(\pair{u_{i-1}}{u_i})$. Since $\cg{u_{i-1}}{u_i}\leq \cg{u_0}{u_n}$, \labqb{} yields $q_i\leqnu p$ for all $i$. Next, assume that $r\in H$ such that $q_i\leqnu r$ for all $i$. By the surjectivity of $\gamma$, there exists a $\pair vw\in\pairs L$ such that $\gamma(\pair vw)=r$. It follows by \labqa{} that $\pair{u_{i-1}}{u_i}\in \cg{u_{i-1}}{u_i}\leq \cg vw$. Since $\cg vw$ is transitive and collapses the pairs $\pair{u_{i-1}}{u_i}$, it collapses $\pair{u_{0}}{u_n}$. Hence, $\cg{u_{0}}{u_n}\leq \cg vw$, and \labqb{} implies $p\leqnu r$.
\end{proof}

Now, we are in the position to deal with the following lemma.

\begin{lemma}\label{mainlemma} The structure $\spa {\alg L}$, which is defined in \eqref{spaldef} with assumption \eqref{asumLpar}, is an auxiliary structure, and $\alg L$ is a substructure of $\spa {\alg L}$. Furthermore, if $\alg L$ is countable, then so is $\spa {\alg L}$.
\end{lemma}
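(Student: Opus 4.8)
The plan is to verify the eight axioms \labaxa{}--\labaxg{} for $\spa{\alg L}$ one by one, since Lemma~\ref{spalislat} already gives that $\spa L$ is a lattice with $L$ as a sublattice, and the facts $\spa H=H$, $\spa\nu\supseteq\nu$, $0_{\spa H}=0_H$, $\spa\delta=\delta$, $\spa\epsilon=\epsilon$ make the substructure claim and the countability claim essentially immediate once \labaxa{} is in place. The genuine work is \labaxa{}, i.e.\ that $\tuple{\spa L;\spa\gamma,\spa H,\spa\nu}$ is a quasi-colored lattice: surjectivity of $\spa\gamma$ is clear from the definition (old pairs cover all of $\nu$-image, the pairs $\pair{d_{pq}}{e_{pq}}$, $\pair{c_{pq}}{d_{pq}}$ realize $p$ and $q$, the diagonal gives $0_{\spa H}$, and any non-covering comparable pair gives $1_{\spa H}$, which lies in $H$ by \labaxh{} applied to $\alg L$ — here I should note that if $H$ had no greatest element before, one must check $1_{\spa H}$ is actually attained, which it is because $\spa\nu$ has a greatest element $1_H$ forced by $(\spa H\times\set{1_H})$... wait, no: $\spa\nu=\quos{\nu\cup\set{\pair pq}}$ does \emph{not} add a top, so I must instead argue that whenever $x<_{\spa L}y$ but $x\not\prec_{\spa L}y$, there is genuinely some element of $H$ that $\spa\gamma$ sends it to — the "otherwise" clause names $1_{\spa H}$, so $\alg L$ must already have $1_H\in H$; if it does not, no such non-covering comparable pair can arise, which needs a short separate check of the order $\spa L$).

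Next I would do \labqa{} and \labqb{}. The strategy is to reduce congruence containments in $\spa L$ to containments in $L$. The key geometric observation is that collapsing any new covering pair in $S_6^{pq}$ already collapses all of $S_6^{pq}$ except possibly the "$B_6^{pq}$-part", and conversely that $\cg{c_{pq}}{d_{pq}}=\cg{a_q}{b_q}$, $\cg{d_{pq}}{e_{pq}}=\cg{a_p}{b_p}$, $\cg{f_{pq}}{g_{pq}}=\cg{a_p}{b_p}$, $\cg{c_{pq}}{e_{pq}}=\cg{a_q}{b_q}$ when computed in $\spa L$, and that congruences of $\spa L$ restrict to congruences of $L$ with the expected interaction — this is exactly the content of \init{G.\ }Gr\"atzer~\cite{ggprincl}'s analysis of $\grlat$ and should be quotable or redone via Lemma~\ref{llajtorja}. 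Then, using the Chain Lemma (Lemma~\ref{chainlemma}) and the join-formula \eqref{joindeR}, every $\spa\gamma$-value of an arbitrary ordered pair decomposes along a covering chain into a $\spa\nu$-join of values in $H\cup\set p\cup\set q$, so \labqa{} and \labqb{} for $\spa{\alg L}$ follow from \labqa{}, \labqb{} for $\alg L$ together with the single new relation $p\parallelnu q$ becoming $p\lessnu$-comparable... no: $\spa\nu$ adds $\pair pq$, which turns $p\parallelnu q$ into $p\leq_{\spa\nu}q$, and one checks $\cgi{\spa L}{a_p}{b_p}\leq\cgi{\spa L}{a_q}{b_q}$ holds precisely because of how $\grlat$ is built (the pair $\pair{a_p}{b_p}$ is weakly perspective into $\pair{a_q}{b_q}$ through the new elements), while all \emph{other} new containments are spurious.

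For the remaining axioms: \labaxb{}, \labaxc{} are immediate since $\spa\delta,\spa\epsilon,\spa\gamma$ agree with the old ones on $H$ and on the quotients $\epsilon(x)/\delta(x)$, which are still covering pairs in $\spa L$ (no new element is inserted strictly between any old covering pair of $L$ except inside $b_p/a_p$-type intervals... I must check that $\delta(x)\prec_{\spa L}\epsilon(x)$ is preserved — new elements only sit between $0_L,1_L$ and the $a,b$'s, so a covering pair $a_x\prec b_x$ with $\set{a_x,b_x}\cap\set{0_L,1_L}=\emptyset$, guaranteed for $|H|\geq3$, stays covering; the small-$|H|$ cases are handled directly). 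Axiom \labaxd{} (all distinct nonzero $p',q'$ give $N_6$-quadruples): the old pairs still do by \labaxd{} for $\alg L$, since being an $N_6$-quadruple is witnessed inside the sublattice $L$. Axiom \labaxe{} is the subtle one together with \labaxa{}: when $p'\parallel_{\spa\nu}q'$ and the quadruple spans $\spa L$, I must produce a \emph{strong} $N_6$-quadruple; since $\spa\nu\supseteq\nu$, $p'\parallel_{\spa\nu}q'$ implies $p'\parallelnu q'$, so \labaxe{} for $\alg L$ applies — but one must also verify the spanning/strong conditions \eqref{labsa}, \eqref{labsb} still hold in the \emph{larger} lattice $\spa L$, i.e.\ that no new element $x\in N_6^{pq}$ violates "$0<x\leq b_i\then x\vee a_{3-i}=1$"; this uses the explicit structure of $\grlat$ (every new element's join with the relevant $a$ is $1_{\spa L}$) and the fact that $1_{\spa L}=1_L$. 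Axioms \labaxf{} and \labaxg{} are only active when $L$ (equivalently $\spa L$) is bounded: \labaxf{} because the new elements $c_{pq},d_{pq},e_{pq},f_{pq},g_{pq}$ are, by the remark following Figure~\ref{fig5}'s construction, complements of all old elements, adding at least the required stock of doubly-irreducible complemented atoms/coatoms; \labaxg{} because inserting the $\grlat$ gadget does not merge $0_{\spa L}$ and $1_{\spa L}$ under $\congen{\set{\pair{\delta(p')}{\epsilon(p')}:p'\neq 1_H}}$ — here I would argue the restriction of that congruence to $L$ equals the corresponding old congruence, which is $\neq\nablaell L$ by \labaxg{} for $\alg L$, hence the new one is $\neq\nablaell{\spa L}$.

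The main obstacle I expect is \labqa{} together with the delicate half of \labaxa{}: one has to show that the newly created $\spa\nu$-comparabilities are exactly mirrored by congruence containments in $\spa L$ and, crucially, that \emph{no unwanted} congruence containments appear — e.g.\ that inserting $\grlat$ does not accidentally force $\cgi{\spa L}{a_p}{b_p}=\cgi{\spa L}{a_q}{b_q}$ or collapse something into $\nablaell{\spa L}$. Controlling this requires the strong $N_6$-quadruple hypothesis \eqref{asumLpar} in an essential way (it is what makes the weak-perspectivity chains of Lemma~\ref{llajtorja} through the new elements "one-directional"), and it is precisely the place where \init{G.\ }Gr\"atzer's lattice $\grlat$ was designed to do its job; the infinite-length generalization means I cannot just enumerate cases as in~\cite{ggprincl} but must phrase the perspectivity bookkeeping uniformly via \eqref{spanorder} and \eqref{joindeR}.
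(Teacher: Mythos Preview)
Your overall architecture matches the paper's: verify \labaxa{}--\labaxg{} one by one, with the genuine work concentrated in \labqa{}, \labqb{}, and \labaxe{}. A few points where your plan diverges or wobbles.

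Your worry about $1_{\spa H}$ dissolves immediately: assumption \eqref{asumLpar} says the $N_6$-quadruple is \emph{spanning}, so $L$ is bounded; then $\gamma(\pair{0_L}{1_L})$ is a greatest element of $H$ by surjectivity and \labqb{}, unique by \labaxh{}. Hence $1_{\spa H}=1_H$ exists and the ``otherwise'' clause is well-defined; your proposed fallback (``no such pair can arise'') is both unnecessary and false, since e.g.\ $\pair{0_L}{c_{pq}}$ is mixed and gets color $1_{\spa H}$.

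The substantive gap is in your treatment of \labaxg{} and, through it, of \labqb{}. The paper does \emph{not} argue \labaxg{} by restricting a $\spa L$-congruence to $L$; instead it explicitly constructs a congruence $\spa\Theta$ on $\spa L$ whose blocks are the $\Theta$-blocks (with $\Theta$ from \labaxg{} for $\alg L$) together with $\{c_{pq},d_{pq},e_{pq}\}$ and $\{f_{pq},g_{pq}\}$, and checks directly via \eqref{joindeR} that this is a congruence. This $\spa\Theta$ does double duty: it yields \labaxg{} at once, and it gives the equivalence
\[
\spa\gamma(\pair xy)=1_{\spa H}\iff \cgi{\spa L}{x}{y}=\nablaell{\spa L},
\]
which is exactly the ``no unwanted collapse into $\nablaell{\spa L}$'' fact you flag as the main obstacle. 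In the paper's \labqb{} argument this equivalence is used twice: first to dispose of the case $\cgi{\spa L}{u_2}{v_2}=\nablaell{\spa L}$, and second (via its easy direction) to guarantee that no witness pair in the Dilworth chain of Lemma~\ref{llajtorja} is mixed or $1_{\spa H}$-colored. Your restriction approach to \labaxg{} would need \labqb{} already in hand, so you cannot bootstrap the equivalence from it. Without ruling out mixed witness pairs, the old/new case split you sketch does not close. Also, your phrase ``decomposes along a covering chain'' is off: intervals in $\spa L$ can have infinite length, so one works with the finite Dilworth chain from Lemma~\ref{llajtorja}, not a maximal chain, and applies the Chain Lemma to \emph{that} chain at the end.

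Minor point on \labaxf{}: the new elements do not help, since none of them satisfies $0_{\spa L}\prec x\prec 1_{\spa L}$ (e.g.\ $c_{pq}$ is an atom but not a coatom). The axiom holds because the three old witnesses from $L$ remain atom-coatoms of $\spa L$ and complement the new elements as well; the paper simply calls this obvious.
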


\begin{proof} Since we work both in $\alg L$ and $\spa{\alg L}$, relations, operations and maps are often subscripted by the relevant structure. 
By Lemma~\ref{spalislat}, $\spa L$ is a lattice. Obviously, \labaxb{} and \labaxf{} hold for  $\spa{\alg L}$. Since $\spa\gamma$ is an extension of $\gamma$, $\spa{\delta}= \delta$, $\spa{\epsilon}= \epsilon$, and $L$ is a sublattice of $\spa L$,  we obtain that
 \labaxc{} and \labaxd{} hold in $\spa{\alg L}$.

Let $r_1,r_2\in\spa H$. 
Since $\nu$ is transitive, $p\parallelnu q$, and $\spa\nu=\bigquos{\nu\cup \set{\pair pq}}$, we obtain that 
\begin{equation}\label{twopssb}
\pair{r_1}{r_2}\in \spa\nu\iff
r_1\leqnu p\text{ and }q\leqnu r_2\text{, or }r_1\leqnu r_2\text.
\end{equation}
This clearly implies that \labaxh{} holds for $\spa{\alg L}$.

It follows from \labqa{} that if $\pair xy\in \pairs L$ and $\gamma(\pair xy)=1_{H}$, then we have $\cgi Lxy=\nablaell L$. Combining this with \labaxf{}, we obtain easily that for all $\pair xy\in\pairs {\spa L}$,
\begin{equation}
\spa\gamma(\pair xy)=1_{\spa H} \Then \cgi{\spa L}xy  = \nablaell {\spa L}\text.\label{haegyakok}
\end{equation}

Let $\Theta$ denote the congruence of $L$ described in \labaxg{}. Consider the equivalence relation $\spa\Theta$ on $\spa L$ whose classes (in other words, blocks) are the $\Theta$-classes, $\set{c_{pq},d_{pq},e_{pq}}$ and $\set{f_{pq},f_{pq}}$. Based on \eqref{joindeR} and its dual, a  straightforward argument shows that, for all $x,y\in\spa L$,  $\pair{x\wedge y}{x}\in\spa\Theta$ 
if{f}
$\pair y{x\vee y}\in \spa\Theta$. Clearly, the intersection of $\spa\Theta$ and the ordering of $\spa L$ is transitive. Hence, we conclude that $\spa\Theta$ is a congruence on $\spa L$. Since it is distinct from $\nablaell{\spa L}$, 
$\spa{\alg L}$ satisfies \labaxg{}.

Next, we prove the converse of \eqref{haegyakok}. Assume that $\pair xy\in\pairs{\spa L}$ such that $\spa\gamma(\pair xy)\neq 1_{\spa H}$; we want to show that 
$\cgi{\spa L}xy\neq\nablaell{\spa L}$. Since this is clear if $x=y$, we assume $x\neq y$. First, if $x,y\in L$, then let $r=\gamma(\pair xy)$.
Applying  \labqa{} to $\gamma$ and \labaxc{} to $\alg L$,  we obtain $ \cgi{L}xy =\cgi{L}{\delta (r)}{\delta (r)}$. Hence $\Theta$, which we used in the previous paragraph, collapses  $\pair xy$, and $\cgi{\spa L}xy\subseteq\spa\Theta\subset  \nablaell{\spa L}$.  Second, if $\set{x,y}\cap L=\emptyset$, then 
$\pair xy$ is perspective to $\pair{a_p}{b_p}$ or $\pair{a_q}{b_q}$, whence 
$ \cgi{\spa L} xy \in
\bigset{  \cgi{\spa L}{a_p}{b_p},  \cgi{\spa L}{a_q}{b_q}   }$ reduces the present case to the 
previous one. Finally, $|L\cap\set{x,y}|=1$ is excluded since then $\pair xy$ would be  $1_{\spa H}$-colored. Now, after  verifying the converse of \eqref{haegyakok}, we have proved 
that for all $\pair xy\in\pairs {\spa L}$,
\begin{equation}
\spa\gamma(\pair xy)=1_{\spa H} \iff \cgi{\spa L}xy  = \nablaell {\spa L}\text.\label{haiirkok}
\end{equation}

Next, to prove that $\spa\gamma$ satisfies \labqa{}, assume that $\pair{u_1}{v_1},\pair{u_2}{v_2}\in \pairs{\spa L}$ such that $\spa\gamma(\pair{u_1}{v_1} )\leqnuspa \spa\gamma(\pair{u_2}{v_2} )$.
Let $r_i=\spa\gamma(\pair{u_i}{v_i} )$, for $i\in\set{1,2}$. We have to show $\cgi{\spa L}{u_1}{v_1}\leq \cgi{\spa L}{u_2}{v_2}$. By \eqref{haiirkok}, we can assume that $r_2\neq 1_{\spa H}$.  Thus, by \labaxh{}, we have $r_1\neq 1_{\spa H}$. We can also assume that  $r_1\neq 0_{\spa H}$ since otherwise  $\cgi{\spa L}{u_1}{v_1}=\cgi{\spa L}{u_1}{u_1}=\Delta_{\spa L}$ 
would clearly imply $\cgi{\spa L}{u_1}{v_1}\leq \cgi{\spa L}{u_2}{v_2}$. Thus, $r_1,r_2\in H\setminus\set{0_H,1_H}$. By the construction of $\spa L$, $\pair {u_i}{v_i}$ is perspective to some $\pair {u_i'}{v_i'}\in\pairs L$ such that
$\spa\gamma(\pair{u_i}{v_i})=\spa\gamma(\pair{u_i'}{v_i}')$,  and perspectivity implies  $\cgi{\spa L}{u_i}{v_i}= \cgi{\spa L}{u_i'}{v_i'}$. Therefore, we can assume that $\pair{u_1}{v_1}, \pair{u_2}{v_2} \in \pairs L$, because  otherwise we could work with $\pair{u_1'}{v_1'}$ and $\pair{u_2'}{v_2'}$.

According  to  \eqref{twopssb}, we  distinguish two cases.
First, assume that $r_1\leqnu r_2$. Since $\spa\gamma$ extends $\gamma$, we have $\gamma(\pair{u_1}{v_1} ) = \spa\gamma(\pair{u_1}{v_1} )
=r_1\leqnu r_2=\spa\gamma(\pair{u_2}{v_2} )= \gamma(\pair{u_2}{v_2} )$. Applying \labqa{} to $\gamma$, we obtain $\pair{u_1}{v_1}\in \cgi{L}{u_1}{v_1}\leq \cgi{L}{u_2}{v_2}$. Using Lemma~\ref{llajtorja}, first in $L$ and then, backwards, in $\spa L$, we obtain $\pair{u_1}{v_1}\in\cgi{\spa L}{u_2}{v_2}$, which yields $\cgi{\spa L}{u_1}{v_1}\leq \cgi{\spa L}{u_2}{v_2}$.

Second, assume that $r_1\leqnu p$ and $q\leqnu r_2$. Since $\spa\gamma(\pair {a_p}{b_p})=\gamma(\pair {a_p}{b_p})=p$ and $\spa\gamma(\pair {a_q}{b_q})=q$ by \labaxc{}, the argument of the previous paragraph yields that we have
$\cgi{\spa L}{u_1}{v_1}\leq \cgi{\spa L}{a_p}{b_p}$
and $\cgi{\spa L}{a_q}{b_q}\leq \cgi{\spa L}{u_2}{v_2}$. 
Clearly (or applying Lemma~\ref{llajtorja} within $S_6^{pq})$, we have $\cgi{\spa L}{a_p}{b_p}\leq \cgi{\spa L}{a_q}{b_q}$. Hence, transitivity yields $\cgi{\spa L}{u_1}{v_1}\leq \cgi{\spa L}{u_2}{v_2}$. Consequently, $\spa\gamma$ satisfies  \labqa{}.

Next, to prove that $\spa\gamma$ satisfies \labqb{}, assume that $\pair{u_1}{v_1}, \pair{u_2}{v_2}\in\pairs{\spa L}$ such that $\cgi{\spa L}{u_1}{v_1} \leq \cgi{\spa L}{u_2}{v_2}$. Our purpose is to show the inequality $\spa\gamma(\pair{u_1}{v_1} )\leqnuspa \spa\gamma(\pair{u_2}{v_2} )$.  By \eqref{haiirkok}, we can assume $\cgi{\spa L}{u_2}{v_2}\neq \nablaell{\spa L}$, and we can obviously assume $u_1\neq v_1$. That is, $\set{  \cgi{\spa L}{u_1}{v_1},\, \cgi{\spa L}{u_2}{v_2}  }\cap \set{\Delta_{\spa L},\nablaell{\spa L}}=\emptyset$.
A pair $\pair {w_1}{w_2}\in\pairs{\spa L}$ is called \emph{mixed}
if $|\set{i: w_i\in L}|=1$. That is, if one of the components is old and the other one is new. It follows from the construction of $\spa{\alg L}$ and \eqref{haiirkok} that none of $\pair{u_1}{v_1}$ and $\pair{u_2}{v_2}$ is mixed. 
If $\pair{u_1}{v_1}$ is a new pair, that is, if $\set{u_1,v_1}\cap L=\emptyset$, then we can consider an old pair $\pair{u'_1}{v'_1}$ such that $\spa\gamma(\pair{u_1'}{v_1'} )=\spa\gamma(\pair{u_1}{v_1} )$ and, by perspectivity, $\cgi{\spa L}{u'_1}{v'_1} = \cgi{\spa L}{u_1}{v_1}$. Hence, we can assume that $\pair{u_1}{v_1}$ is an old pair, and similarly for the other pair. That is, we assume that  both $\pair{u_1}{v_1}$ and  $\pair{u_2}{v_2}$ belong to $\pairs{ L}$.

The starting assumption $\cgi{\spa L}{u_1}{v_1} \leq \cgi{\spa L}{u_2}{v_2}$ means that $\pair{u_1}{v_1} \in \cgi{\spa L}{u_2}{v_2}$. This is witnessed by Lemma~\ref{llajtorja}. Let $x_j, y_{ij}, z_{ij}\in \spa L$ be elements for $i\in\set{1,\dots,n}$ and $j\in\set{0,\dots,n}$ that satisfy \eqref{lajtorjaformula}; see also Figure~\ref{fig6}. To ease our terminology, the ordered pairs $\pair{y_{ij}}{y_{ij}}$ will be called \emph{witness pairs} (of the containment $\pair{u_1}{v_1} \in \cgi{\spa L}{u_2}{v_2}$). Since $\cgi{\spa L}{u_2}{v_2}\neq \nablaell{\spa L}$, none of the witness pairs generate $\nablaell{\spa L}$. Thus, by \eqref{haiirkok}, 
\begin{equation}
\text{none of the witness pairs is mixed or }1_{\spa H}\text{-colored.}\label{nonm1col}
\end{equation}

Take two consecutive witness pairs, $\pair{y_{i,j-1}}{z_{i,j-1}}$ and $\pair{y_{ij}}{z_{ij}}$.
Here $i,j\in\set{1,\dots,n}$. 
Our next purpose is to show that 
\begin{equation}
\spa\gamma(\pair{y_{i,j-1}}{z_{i,j-1}}) \leqnuspa \spa\gamma(\pair{y_{ij}}{z_{ij}})\text. \label{winprs}
\end{equation}
We assume $y_{i,j-1}<z_{i,j-1}$ since  \eqref{winprs} trivially holds if these two elements are equal. Hence, ${y_{ij}}<{z_{ij}}$ also holds.

\begin{case}[Either  $\pair{y_{i,j-1}}{z_{i,j-1}}$ and $\pair{y_{ij}}{z_{ij}}$ are old, or both are new]  
If both $\pair{y_{i,j-1}}{z_{i,j-1}}$ and $\pair{y_{ij}}{z_{ij}}$ are old pairs, that is, if they belong to $\pairs L$, then \eqref{lajtorjaformula} yields
$ \cgi L{y_{i,j-1}}{z_{i,j-1}}\leq \cgi L{y_{ij}}{z_{ij}}$. From this, we conclude  the relation $\gamma(\pair{y_{i,j-1}}{z_{i,j-1}}) \leqnu \gamma(\pair{y_{ij}}{z_{ij}})$ by \labqb, applied for $\alg L$, and we obtain the validity of \eqref{winprs} for old witness pairs, because  $\spa\gamma$ extends $\gamma$.

If both $\pair{y_{i,j-1}}{z_{i,j-1}}$ and $\pair{y_{ij}}{z_{ij}}$ are new pairs, that is, if they belong to $\pairs{N_6^{pq}}$, then \eqref{lajtorjaformula} and \eqref{nonm1col} allow only two possibilities: $\spa\gamma(\pair{y_{i,j-1}}{z_{i,j-1}}) = \spa\gamma(\pair{y_{ij}}{z_{ij}})$, or $\spa\gamma(\pair{y_{i,j-1}}{z_{i,j-1}}) =p$ and $ \spa\gamma(\pair{y_{ij}}{z_{ij}})=q$. In both cases, \eqref{winprs} holds. 
\end{case}

\begin{case}[$\pair{y_{i,j-1}}{z_{i,j-1}}$ is old and $\pair{y_{ij}}{z_{ij}}$, is new]\label{old2newcase}
Assume first that $j$ is odd, that is,   $\pair{y_{i,j-1}}{z_{i,j-1}}$ is weakly up-perspective into $\pair{y_{ij}}{z_{ij}}$. Since $y_{ij}$, being a new element, and $z_{i,j-1}$ are both distinct from $y_{i,j-1}$, 
\begin{equation}
z_{i,j-1}\parallel y_{ij}\text.\label{osidHn}
\end{equation} 
Since $0_{\spa L}\leq y_{i,j-1}<z_{i,j-1} < z_{ij} $, $z_{i,j-1}$ is an old element, and  $z_{ij}$ is a new one, $z_{ij}\in\set{f_{pq}, g_{pq}}$. Taking $y_{ij}<z_{ij}$ and \eqref{nonm1col} into account, we obtain 
$y_{ij}=f_{pq}$ and $z_{ij}=g_{pq}$. 
Applying the definition of $\leq_{\spa L}$ for the elements of the old witness pair and using the ``weak up-perspectivity relations'' from \eqref{lajtorjaformula}, we have 
$y_{i,j-1}\leq a_p<f_{pq}$. Similarly, but also taking $z_{i,j-1}\parallel y_{ij}$ into account, we obtain $z_{i,j-1}\leq b_p<g_{pq}$. We claim that $\pair{y_{i,j-1}}{z_{i,j-1}}$ is up-perspective to $\pair{a_p}{b_p}$. We can assume $z_{i,j-1}<b_p$, because otherwise  they would be equal, we would have $y_{i,j-1}=z_{i,j-y}\wedge f_{pq}=b_p\wedge f_{pq}=a_p$, and the two pairs would be the same.
Hence, from $a_p\prec b_p$, $z_{i,j-1}<b_p$ and $z_{i,j-1}\parallel y_{ij}=f_{pq}$, we obtain $z_{i,j-1}\parallel a_p$ and $z_{i,j-1}\vee a_p=b_p$. Since $y_{i,j-1}\leq z_{i,j-1}\wedge a_p \leq z_{i,j-1}\wedge y_j=y_{i,j-1}$, the old pair $\pair{y_{i,j-1}}{z_{i,j-1}}$ is up-perspective to the old pair $\pair{a_p}{b_p}$. Hence, $\cgi L{y_{i,j-1}}{z_{i,j-1}}=\cgi L{a_p}{b_p}$. Applying \labqb{} for $\alg L$, we obtain 
\begin{align*}
\spa\gamma(\pair{y_{i,j-1}}{z_{i,j-1}}) 
&=\gamma(\pair{y_{i,j-1}}{z_{i,j-1}}) \overset{\labqb{}}=
\gamma(\pair{a_p}{b_p})
\overset{\labaxc{}}=p\cr 
&=\spa\gamma(\pair{f_{pq}}{f_{pq}})
= \spa\gamma(\pair{y_{ij}}{z_{ij}}),
\end{align*}
which implies \eqref{winprs} if $j$ is odd. 

Second, let $j$ be even. That is, we assume that  $\pair{y_{i,j-1}}{z_{i,j-1}}$ is weakly down-perspective into $\pair{y_{ij}}{z_{ij}}$. The dual of the previous argument shows that $y_{ij}=c_{pq}$ and $z_{ij}\in\set{d_{pq},e_{pq}}$. However, $z_{ij}=d_{pq}$ or $z_{ij}=e_{pq}$ does not make any difference, and 
$\spa\gamma(\pair{y_{i,j-1}}{z_{i,j-1}}) 
=q= \spa\gamma(\pair{a_q}{b_q})= \spa\gamma(\pair{y_{ij}}{z_{ij}})$
settles \eqref{winprs} for $j$ even.
\end{case}

\begin{case}[$\pair{y_{i,j-1}}{z_{i,j-1}}$ is new and  $\pair{y_{ij}}{z_{ij}}$, is old] Like in Case~\ref{old2newcase}, it suffices to deal with an odd  $j$, because an even $j$   could be treated dually. 
Since $\pair{y_{i,j-1}}{z_{i,j-1}}$ is weakly up-perspective into $\pair{y_{ij}}{z_{ij}}$ and $1_{\spa L}$ is the only old element above $f_{pq}$, we obtain $y_{i,j-1}\in\set{c_{pq}, d_{pq},e_{pq}}$.
We obtain  \eqref{osidHn} as before. Taking \eqref{nonm1col} also into account, we obtain that $y_{i,j-1}=c_{pq}$ and $z_{i,j-1}$ is one of $d_{pq}$ and $e_{pq}$. No matter which one, 
an argument dual to the one used in  Case~\ref{old2newcase} yields $a_q=b_{q}\wedge y_{ij}$ and $b_q\leq z_{ij}$. Hence, $\pair{a_q}{b_q}$ is weakly up-perspective into $\pair{y_{ij}}{z_{ij}}$, and we obtain 
\[
\cgi L{a_q}{b_q}\leq \cgi L{y_{ij}}{z_{ij}} \overset{\labqb}{\Then}
q\overset{\labaxc}= \gamma(\pair{a_q}{b_q} )\leqnu \gamma(\pair {y_{ij}}{z_{ij}}),
\]
which implies
\[\spa\gamma(\pair{y_{i,j-1}}{z_{i,j-1}}) =q \leq_{\spa\nu} \spa\gamma(\pair {y_{ij}}{z_{ij}}),
\]
and \eqref{winprs} follows again. 
\end{case}

Now that we have proved \eqref{winprs}, observe that \eqref{winprs} for $j=1,\dots,n$ and transitivity yield
$\spa\gamma(\pair{x_{i-1}}{x_{i}}) =
\spa\gamma(\pair{y_{i0}}{z_{i0}})  \leqnuspa \spa\gamma(\pair{y_{in}}{z_{in}}) = \spa\gamma(\pair{u_2}{v_2})$. Hence, Lemma~\ref{chainlemma} implies
$\spa\gamma(\pair{u_1}{v_1})\leqnuspa \spa\gamma(\pair{u_2}{v_2})$. Therefore, $\spa{\alg L}$ satisfies \labqb{}, and  \labaxa{} holds for $\spa{\alg L}$.

Next, to prove that $\spa{\alg L}$ satisfies \labaxe{}, assume that 
$r,s\in H$ such that  $r\parallel_{\spa\nu} s$ and $\tuple{\delta (r),\epsilon (r), \delta (s),\epsilon (s)}=\tuple{a_r,b_r,a_s,b_s}$   is a spanning $N_6$-quadruple. 
We want to show that it is a strong $N_6$-quadruple of $\spa L$.  The treatment for \eqref{labsb} is almost the dual of that for \eqref{labsa}, whence we give the details only for  \eqref{labsa}. Since the role of $r$ and $s$ is symmetric, it suffices to deal with the case $0<x\leq b_r$; we want to show $x\vee_{\spa L} a_s=1_{\spa L}$.
Since $r\parallel_{\spa\nu} s$  implies $r\parallelnu s$, $L$ is a $\set{0,1}$-sublattice of $\spa L$, and \labaxe{} holds for $\alg L$, we obtain $x\vee_{\spa L} a_s=1_{\spa L}$ for old elements, that is, for all $x\in L$ such that $0<x\leq b_r$. 

Hence, we assume that $x$ is a new element, that is, $x\in N_6^{pq}$. Since $b_r$ is an old element and $x\leq b_r<b_r\vee_L b_s = 1_{\spa L}$, we obtain $x\notin\set{f_{pq},g_{pq}}$. Hence, $x\in  \set{c_{pq},d_{pq},e_{pq}}$. If we had $r\neq q$, then $x\leq b_r$ and the description of $\leq_{\spa L}$ would imply $a_q\leq b_r$, which would be a contradiction since \labaxd{} holds in $\alg L$. Consequently, $r=q$. Thus,  we have $0<x\leq b_q$, and we know from  $s\parallel_{\spa\nu} r=q$ and $p\leq_{\spa \nu} q$  that $s\notin\set{p, q, 0_H}$ and $s\parallelnu q$. We also know $p\neq 0_H$ since $p\parallelnu q$. 

If we had $a_s\in\ideal{g_{pq}}$, then the description of $\leq_{\spa L}$ would yield $a_s\leq b_p$, which would contradict \labaxd{}. 
Hence, $a_s\notin\ideal{g_{pq}}$, and  \eqref{joindeR} gives $x\vee_{\spa L} a_s=\fcs x\vee_L a_s$.  Therefore, since the spanning $N_6$-quadruple $\tuple{a_q,b_q,a_s,b_s}=\tuple{a_r,b_r,a_s,b_s}$ is strong in $\alg L$ by \labaxe{} 
and $0<x<\fcs x\leq b_q$, we conclude $\fcs x\vee_L a_s=1_L$, which implies  the desired  $x\vee_{\spa L} a_s=1_{\spa L}$.  Consequently, $\spa{\alg L}$ satisfies \labaxe{}. This completes the proof of Lemma~\ref{mainlemma}.
\end{proof}

\section{Approaching infinity}
For an ordered set $P=\tuple{P;\leq}$ and a subset $C$ of $P$, the restriction of the ordering of $P$ to $C$ will be denoted by $\restrict{\leq}{C}$. If each element of $P$ has an upper bound in $C$, then $C$ is a \emph{cofinal subset} of $P$. 
The following lemma belongs to the folklore; having no reference at hand, we will outline its easy proof.

\begin{lemma}\label{cofinalitylemma}
If an ordered set  $P=\tuple{P;\leq}$ is the union of a chain of principal  ideals, then it has a cofinal subset $C$ such that $\tuple{C; \restrict{\leq}{C}}$ is a well-ordered set.
\end{lemma}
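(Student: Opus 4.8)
The plan is to construct the cofinal well-ordered subset $C$ by transfinite recursion, using the given chain of principal ideals as a scaffold. Suppose $P=\bigcup_{i\in I}\ideal{c_i}$ where $\set{\ideal{c_i}:i\in I}$ is a chain under inclusion; equivalently, $\set{c_i:i\in I}$ is a chain in $P$ that happens to be cofinal (since $x\in P$ lies in some $\ideal{c_i}$, hence $x\leq c_i$). So without loss of generality we may start from a \emph{cofinal chain} $D\subseteq P$. The only remaining task is to thin $D$ out to a well-ordered cofinal subset, because a chain need not be well-ordered, but a cofinal subchain can always be extracted.

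First I would reduce to the case where $D$ has a cofinal subset of minimal order type. Let $\kappa$ be the least ordinal such that $D$ contains a cofinal subset order-isomorphic, as a chain, to some subset indexed by $\kappa$ — more precisely, let $\kappa=\textup{cf}(D)$ be the cofinality of the chain $D$, i.e.\ the least cardinality (equivalently least order type that is a cardinal) of a cofinal subset of $D$. By definition of cofinality there is a cofinal subset $E=\set{d_\alpha:\alpha<\kappa}\subseteq D$ with the property that its enumeration can be chosen strictly increasing: recursively pick $d_\alpha\in D$ to be an upper bound in $D$ of $\set{d_\beta:\beta<\alpha}$ that is not already below any previously chosen element — such a $d_\alpha$ exists, for otherwise $\set{d_\beta:\beta<\alpha}$ would be cofinal in $D$ with $|\alpha|<\kappa$, contradicting the minimality of $\kappa$. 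Then $\alpha<\beta<\kappa$ forces $d_\alpha<d_\beta$, so $E$ with the induced order is order-isomorphic to the ordinal $\kappa$, hence well-ordered. Finally $E$ is cofinal in $P$: any $x\in P$ has an upper bound $d\in D$ since $D$ is cofinal in $P$, and $d$ has an upper bound $d_\alpha\in E$ since $E$ is cofinal in $D$, so $x\leq d_\alpha$. Taking $C=E$ and $\restrict{\leq}{C}$ the induced order finishes the proof.

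The one point that needs care — and is the closest thing to an obstacle — is the successor/limit bookkeeping in the recursion: at a limit stage $\alpha$ we need $\set{d_\beta:\beta<\alpha}$ to actually have an upper bound in $D$, which is where we use that $D$ is a \emph{chain} whose cofinality is $\kappa>|\alpha|$, so this set is not cofinal and hence some $d\in D$ strictly dominates a cofinal-in-$\set{d_\beta:\beta<\alpha}$ tail; since it's a chain, $d$ dominates all of $\set{d_\beta:\beta<\alpha}$, and we can further arrange $d\not\leq$ any earlier $d_\gamma$ — if no such $d$ existed the $d_\beta$'s would be cofinal in $D$. At successor stages the same argument, with $\alpha$ replaced by its predecessor plus the single new constraint, works and is even easier. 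Everything else is routine, so I would present the argument at roughly the level of detail above, noting only that ``chain of principal ideals'' immediately yields the cofinal chain $D$ and that the cofinality argument is standard.
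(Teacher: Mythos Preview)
Your approach is correct and genuinely different from the paper's. Both begin the same way, extracting the cofinal chain $D$ of top elements of the given principal ideals, but then diverge. The paper applies Zorn's Lemma to the family $\mathcal H(D)$ of well-ordered subsets of $D$, ordered by ``$X$ is an order ideal of $Y$'', and takes a maximal member $C$; maximality immediately forces $C$ to be cofinal (otherwise append a strict upper bound). You instead invoke the cofinality $\kappa=\operatorname{cf}(D)$ and build a strictly increasing $\kappa$-sequence by transfinite recursion. Your route yields a little more: the well-ordered cofinal subset you produce has order type exactly $\operatorname{cf}(D)$, the minimum possible. The paper's Zorn argument is shorter and avoids naming $\kappa$ at all.

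One point deserves tightening. As literally written, your recursion only guarantees a strictly increasing $\kappa$-sequence in $D$, not a cofinal one; and these are not the same thing. For example, in the chain $D=\omega_1+\omega_1$ one has $\operatorname{cf}(D)=\omega_1$, yet the first copy of $\omega_1$ is a strictly increasing $\omega_1$-sequence that is not cofinal. The standard fix is to start from an arbitrary cofinal set $\{e_\alpha:\alpha<\kappa\}\subseteq D$ and, at stage $\alpha$, choose $d_\alpha\in D$ strictly above both $e_\alpha$ and all earlier $d_\beta$; then $d_\alpha\geq e_\alpha$ forces cofinality of $\{d_\alpha:\alpha<\kappa\}$. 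Alternatively, drop $\kappa$ altogether and simply run your recursion until the already-chosen elements form a cofinal set; this must occur at some ordinal stage and delivers the desired well-ordered cofinal subset directly. Either tweak completes your argument.
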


\begin{proof} The top elements of these principal ideals form a cofinal chain $D$ in $P$. Let $\mathcal H(D)=\{X: X\subseteq D$ and $\tuple{X;\restrict\leq X}$ is a well-ordered set$\}$. For $X,Y\in \mathcal H(D)$, let $X\sqsubseteq Y$ mean that $X$ is an order ideal of $\tuple{Y;\restrict \leq Y}$. Zorn's Lemma yields a maximal member $C$ in $\tuple{\mathcal H(D),\sqsubseteq}$. Clearly, $C$ is well-ordered and it is a cofinal subset.
\end{proof}

Now, we combine the  vertical action of  Lemma~\ref{lupstp}  and the horizontal action of   Lemma~\ref{mainlemma} into a single statement. Note that the order ideal  $H$ of  $\tuple{\vesz H,\vesz \nu}$ in the following lemma is necessarily a directed ordered set.

\begin{lemma}\label{combinlemma} Assume that  $\alg L=\tuple{L;\gamma, H,\nu,\delta ,\epsilon }$ is an auxiliary structure such that $\tuple{H,\nu}$ is an order ideal of a bounded ordered set $\tuple {\vesz H,\vesz \nu}$. $($In particular, $\nu$ is an ordering and $\nu=\restrict{\vesz\nu}{H}$.$)$ Then there exists an auxiliary structure  $\vesz{\alg L}=\tuple{\vesz L;\vesz \gamma, \vesz H,\vesz \nu,\vesz \delta ,\vesz\epsilon }$ such that $\alg L $ is a substructure of $\vesz{\alg L}$. Furthermore, if $\alg L$ and $\vesz H$ are countable, then so is $\vesz{\alg L}$.
\end{lemma}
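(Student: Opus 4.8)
The plan is to build $\vesz{\alg L}$ from $\alg L$ in two moves, exactly in the spirit of the naming conventions already set up in the paper. First I would pass from $\alg L$ to its vertical extension $\vcs{\alg L}$ of Lemma~\ref{lupstp}, taking the auxiliary set $K$ to be a set of new ``intermediate colors'' in bijection with $\vesz H\setminus H$; this gives an auxiliary structure whose color set $\vcs H=H\cup K\cup\set{1_{\vcs H}}$ is in bijection with $\vesz H$, in which all of $K\cup\set{1_{\vcs H}}$ sits strictly above $H$, and in which any two colors not both lying in $H$ are realized by a \emph{strong} $N_6$-quadruple. The second, and harder, move is to repeatedly apply the horizontal extension $\spa{(\cdot)}$ of Lemma~\ref{mainlemma} to ``install'' each comparability of $\vesz\nu$ that is missing from the current quasiorder. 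Concretely: identify the carrier set of the target structure with $\vcs L$ (as a \emph{lattice} it does not change — horizontal extensions only insert elements into a spanning $N_6$, and we will take care that this never happens), keep the carrier $\vesz H$ fixed, and transfinitely recurse on a well-ordering of the pairs $\pair rs\in\vesz H^2$ with $r\vesz\nu s$ but $r\not\leqnu s$ in the current structure.

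At a successor stage, given the current auxiliary structure $\alg M=\tuple{\vesz L;\mu_{\text{col}},\vesz H,\mu,\delta',\epsilon'}$ with $\mu\subseteq\vesz\nu$, and given a pair $\pair rs$ with $r\vesz\nu s$ but $r\not\mathrel{\leq_\mu}s$, I would check that the hypothesis \eqref{asumLpar} of the horizontal extension is met: we need $r\parallel_\mu s$ and $\tuple{\delta'(r),\epsilon'(r),\delta'(s),\epsilon'(s)}$ to be a spanning (equivalently, by \labaxe, strong) $N_6$-quadruple. Parallelism holds because $\mu\subseteq\vesz\nu$ and $r\vesz\nu s$ forces $s\not\vesz\nu r$ hence $s\not\mathrel{\leq_\mu}r$; that $r\not=0_{\vesz H}$ and $s\not=1_{\vesz H}$ follows since $0$ sits below everything and $1$ above everything in $\vesz\nu$, so the pair cannot be missing for such $r,s$; and then at least one of $r,s$ lies in $\vesz H\setminus H$, which after the first move means $\set{r,s}\not\subseteq H$ (in the color set of $\vcs{\alg L}$), so the last clause of Lemma~\ref{lupstp} hands us that the quadruple is strong. (Here one must also preserve, as an extra invariant carried through the recursion, that every spanning $N_6$-quadruple occurring in the structure involving a ``not both in $H$'' pair of parallel colors is strong — this is exactly \labaxe{} for the current structure, so it is automatic.) Applying Lemma~\ref{mainlemma} produces $\spa{\alg M}$, an auxiliary structure with the same lattice $\vesz L$ (no new lattice elements, since we never insert into a strong $N_6$ — wait: $\spa{(\cdot)}$ \emph{does} insert the five black-filled elements; so the honest statement is that the underlying lattice grows by at most five elements at each step, which is harmless for countability), with color set still $\vesz H$, and with quasiorder $\mu^{\spa{}}=\quos{\mu\cup\set{\pair rs}}\subseteq\vesz\nu$; moreover $\alg M$ is a substructure of $\spa{\alg M}$.

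At a limit stage I would take the (componentwise) union of the chain of auxiliary structures built so far: the union of a chain of lattices along inclusions is a lattice, the union of the $\gamma$'s is a well-defined surjection onto $\vesz H$, the union of the $\mu$'s is a quasiorder (transitivity passes to directed unions), the $\delta,\epsilon$ maps stabilize, and each of the axioms \labaxa{}--\labaxg{} is preserved: \labqa{} and \labqb{} because any single instance of a congruence inclusion or color inequality is witnessed inside one member of the chain (Lemma~\ref{llajtorja} gives a finite witness, which lives in some stage), \labaxd{}--\labaxe{} because being an $N_6$- or strong $N_6$-quadruple is a property of finitely many elements, \labaxf{} likewise, \labaxh{}/\labaxg{} hold since $\vesz H$ is fixed with its unique $0$ and $1$ throughout. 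After all pairs have been processed, the resulting quasiorder equals $\vesz\nu$: it is contained in $\vesz\nu$ by construction, and it contains every pair of $\vesz\nu$ because any missing pair would have been scheduled and installed. This structure is the desired $\vesz{\alg L}$, and $\alg L$ is a substructure of it by transitivity of the substructure relation along the whole transfinite chain. For countability: if $\alg L$ and $\vesz H$ are countable, then $K$ is countable, $\vcs{\alg L}$ is countable by Lemma~\ref{lupstp}, there are at most countably many pairs to process, each step adds at most five lattice elements (Lemma~\ref{mainlemma} preserves countability), and the chain has length at most $\omega$ after the pairs are well-ordered in type $\le\omega$ — so $\vesz L$ is countable.

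The main obstacle I anticipate is the bookkeeping at the successor step: one must be sure that after installing some comparabilities the color-pair $\pair rs$ one is about to handle is \emph{still} realized by a strong $N_6$-quadruple in the current structure, i.e.\ that earlier horizontal extensions did not destroy the hypothesis \eqref{asumLpar} for later pairs. This is guaranteed precisely by axiom \labaxe{} being maintained as an invariant (every parallel spanning $N_6$ is strong) together with the fact that $\spa{(\cdot)}$ does not make previously-parallel colors comparable unless they are the pair being installed — so the invariant is self-propagating — but stating and checking this invariant carefully is where the real work lies. A secondary, purely cosmetic point is that after a horizontal extension the lattice is no longer literally equal to $\vcs L$ but only an extension of it; one simply does not insist on a fixed carrier and instead tracks the substructure chain, which is exactly what Definition~\ref{defbeagy} is built for.
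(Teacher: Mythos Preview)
Your approach is exactly the paper's: one vertical extension (Lemma~\ref{lupstp}) followed by a transfinite sequence of horizontal extensions (Lemma~\ref{mainlemma}) indexed by a well-ordering of the missing comparabilities, with directed unions at limit ordinals; the paper's explicit invariants \eqref{dirun}--\eqref{spnnning} are precisely the bookkeeping you anticipate as the ``main obstacle,'' and the paper resolves it just as you sketch (horizontal extensions preserve $0_L$ and $1_L$, so spanning quadruples stay spanning, whereupon \labaxe{} upgrades them to strong). Two small corrections: take $K=\vesz H\setminus(H\cup\set{1_{\vesz H}})$ rather than a set in bijection with all of $\vesz H\setminus H$, since the vertical extension already adjoins a fresh top color; and your one-line dismissal of \labaxg{} at limit stages is not right---\labaxg{} is a statement about a lattice congruence, not about the color set, and needs the same compactness-plus-Lemma~\ref{llajtorja} finite-witness argument you correctly invoke for \labqa{} and \labqb{}.
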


\begin{proof} We can assume $H\neq \vesz H$ since otherwise $\vesz{\alg L}=\alg L$ would do. 
Consider the set
\begin{equation}
D=\set{\pair pq: 0_{\vesz H}<_{\vesz\nu}  p <_{\vesz\nu} q<_{\vesz\nu} 1_{\vesz H}\text{ and } p\not<_\nu q }\text.\label{ezD}
\end{equation}
 Since every set  can be well-ordered, we can also write 
 $D=\set{\pair{p_\iota}{q_\iota}: \iota<\kappa}$, where $\kappa$ is an ordinal number.  
In $\Quord {\vesz H}$, we define 
\begin{equation}
\nu_\lambda=\bigquos{\nu\cup (\set{0_{\vesz H}}\times \vesz H)
\cup (\vesz H\times \set{1_{\vesz H}})
\cup \set{\pair{p_\iota}{q_\iota}:\iota<\lambda}} \label{sldiHGk}
\end{equation}
for $\lambda\leq \kappa$. It is an  ordering on $\vesz H$, because  $\nu_\lambda\subseteq \vesz \nu$ implies that it is  antisymmetric. Note that $\nu_\kappa=\vesz\nu$ and  $0_{\vesz H}=0_H$. 
For each $\lambda \leq \kappa$, we want to define an auxiliary structure $\alg L_\lambda=\tuple{L_\lambda;\gamma_\lambda, H_\lambda,\nu_\lambda,\delta_{\lambda},\epsilon_{\lambda}}$ 
such that, for all $\lambda<\kappa$, the following properties be satisfied :
\begin{align}
&\text{$\alg L_\mu$ is a substructure of $\alg L_\lambda$ for all $\mu\leq \lambda$;} \label{dirun}  \\
&\text{$H_\lambda=H_0$, $0_{L_\lambda}=0_{L_0}  $, and $1_{L_\lambda}=1_{L_0}$};\label{djrun}  \\
&\begin{aligned} 
\tuple{\delta_\lambda&(p),\epsilon_\lambda(p),\delta_\lambda(q),\epsilon_\lambda(q)}\text{ is a  spanning }N_6\text{-quadruple (equivalently, }  \cr 
&\text{a strong }N_6\text{-quadruple) for all }\pair pq\in D\text{ such that }p\parallel_{\nu_\lambda} q\text.
\end{aligned}\label{spnnning}
\end{align}
Modulo the requirement that $\alg L_\lambda$ should be an auxiliary structure, the equivalence mentioned in \eqref{spnnning} is a consequence of \labaxe{}.  
We define $\alg L_\lambda$ by (transfinite) induction as follows.

\begin{initstep}
We define $\alg L_0$ by a vertical extension.
Let $K=\vesz H\setminus(H\cup\set{1_{\vesz H}} )$, let $\pair{\vcs H}{\vcs\nu}=\pair{\vesz H}{\nu_0}$, and let
$\alg L_0=\vcs{\alg L}$ be the auxiliary structure what we obtain from $\alg L$ according to Lemma~\ref{lupstp}. Note that, for all $\pair pq\in D$,  $\set{p,q}\not\subseteq H$ since $\nu=\restrict{\vesz \nu}{\vesz H}$. Hence, by Lemma~\ref{lupstp}, \eqref{spnnning} holds for $\lambda=0$.
\end{initstep}

\begin{successorstep}
Assume that $\lambda$ is a successor ordinal, that is, $\lambda=\eta+1$, and $\alg L_\eta=\tuple{L_\eta;\gamma_\eta, H_\eta,\nu_\eta,\delta_{\eta},\epsilon_{\eta}}$ is already defined and satisfies \eqref{dirun}, \eqref{djrun}, and \eqref{spnnning}. 
Since $p_\eta <_{\vesz\nu} q_\eta$ and $\nu_\eta\subseteq \vesz \nu$, we have either $p_\eta <_{\nu_\eta} q_\eta$, or $p_\eta \parallel_{\nu_\eta} q_\eta$. These two possibilities need separate treatments. 
First, if $p_\eta <_{\nu_\eta} q_\eta$, then  $\nu_\lambda=\nu_\eta$ and we let $\alg L_\lambda=\alg L_\eta$. 

Second, let $p_\eta \parallel_{\nu_\eta} q_\eta$. We define $\alg L_\lambda$ from $\alg L_\eta$ by a horizontal extension as follows.   With the notation $\spa\nu=\nu_\lambda$, we obtain from \eqref{sldiHGk} that $\spa\nu=\quos{{\nu_\eta\cup\set{\pair{p_\eta}{q_\eta} } }}\in\Quord{\vesz H}$. Furthermore, the validity of 
\eqref{spnnning} for $\alg L_\eta$ yields that $\pair{p_\eta}{q_\eta}$  is a spanning $N_6$-quadruple of   $\alg L_\eta$. Thus, letting $\pair{p_\eta}{q_\eta}$  and $\alg L_\eta$  play the role of $\pair pq$ and $\alg L$ in  \eqref{asumLpar} and \eqref{spaldef}, respectively, we define  ${\alg L_\lambda}$ as the auxiliary structure $\spa{\alg L}$ taken from Lemma~\ref{mainlemma}. Since $L_\eta$ is a $\set{0,1}$-sublattice of $L_\lambda$, spanning $N_6$-quadruples of $L_\eta$ are also spanning in  $L_\lambda$. Furthermore, it follows from $\nu_{\lambda}\supseteq \nu_\eta$ that   $p\parallel_\lambda q\Then p\parallel_\eta q$.  Hence, we conclude that \eqref{spnnning} is inherited by  ${\alg L_\lambda}$ from 
 ${\alg L_\eta}$.
\end{successorstep}

\begin{limitstep} Assume that $\lambda$ is a limit ordinal. Let
\begin{align*}
L_\lambda=\bigcup_{\eta<\lambda} L_\eta,\qq5  \gamma_\lambda=\bigcup_{\eta<\lambda} \gamma_\eta,\qq5 
H_\lambda=\vesz  H,\qq5 
\nu_\lambda=\bigcup_{\eta<\lambda} \nu_\eta,\qq5
\delta_{\lambda }=\bigcup_{\eta<\lambda} \delta_{\eta },
\qq5
\epsilon_{\lambda}=\bigcup_{\eta<\lambda} \epsilon_{\eta }\text.
\end{align*}
We assert that $\alg L_\lambda=\tuple{L_\lambda;\gamma_\lambda, H_\lambda,\nu_\lambda,\delta_{\lambda},\epsilon_{\lambda}}$ is an auxiliary structure satisfying  \eqref{dirun}, \eqref{djrun}, and \eqref{spnnning}. 

Since all the unions defining $\alg L_\lambda$ are directed unions, $L_\lambda$ is a lattice, and $\tuple{H_\lambda;\nu_\lambda}$ is a quasiordered set. Actually, it is an ordered set since $\nu_\lambda\subseteq \vesz \nu$. By the same reason, $\gamma_\lambda$, $\delta_{\lambda}$, and $\epsilon_{\lambda}$ are maps. 
It is straightforward to check that all of 
 \labaxa{},\dots,\labaxg{} hold for $\alg L_\lambda$; we only do this for \labaxa{}, that is, we verify \labqa{} and \labqb{}, and also for \labaxg{}.

Assume $\gamma_\lambda(\pair{u_1}{v_1})\leq_{\nu_\lambda} \gamma_\lambda(\pair{u_2}{v_2})$. Since the unions are directed, there exists an $\eta<\lambda$ such that $u_1,v_1,u_2,v_2\in L_\nu$, and we have $\gamma_\eta(\pair{u_1}{v_1})\leq_{\nu_\eta} \gamma_\eta(\pair{u_2}{v_2})$. Using that the auxiliary structure $\alg L_\eta$ satisfies \labqa{}, we obtain $\cgi{L_\eta}{u_1}{v_1} \leq \cgi{L_\eta}{u_2}{v_2}$, that is,  $\pair{u_1}{v_1} \in \cgi{L_\eta}{u_2}{v_2}$. Using Lemma~\ref{llajtorja}, we conclude $\pair{u_1}{v_1} \in \cgi{L_\lambda}{u_2}{v_2}$ in the usual way. This  implies  $\cgi{L_\lambda}{u_1}{v_1} \leq \cgi{L_\lambda}{u_2}{v_2}$. Therefore, $\alg L_\lambda$ satisfies \labqa{}.

Similarly, if $\cgi{L_\lambda}{u_1}{v_1} \leq \cgi{L_\lambda}{u_2}{v_2}$, then Lemma~\ref{llajtorja} easily implies the existence of an $\eta<\lambda$ such that $\pair{u_1}{v_1} \in \cgi{L_\eta}{u_2}{v_2}$ and
$\cgi{L_\eta}{u_1}{v_1} \leq \cgi{L_\eta}{u_2}{v_2}$; \labqb{} for $\alg L_\eta$ yields $\gamma_\eta(\pair{u_1}{v_1})\leq_{\nu_\eta} \gamma_\eta(\pair{u_2}{v_2})$; and we conclude
$\gamma_\lambda(\pair{u_1}{v_1})\leq_{\nu_\lambda} \gamma_\lambda(\pair{u_2}{v_2})$. Hence, $\alg L_\lambda$ satisfies \labqb{} and \labaxa{}. 

Next, for the sake of contradiction, suppose that \labaxg{} fails in $\alg L_\lambda$. This implies that 
$\pair{0_{L_\lambda}}{1_{L_\lambda}}$ belongs to $\bigvee\bigset{\cgi{L_\lambda}{a_p}{b_p}: p\in H_\lambda\setminus\set{1_{\vesz H}}}$, where the join is taken in the congruence lattice of $L_\lambda$. Since principal congruences are compact, there exists a finite subset $T\subseteq  H_\lambda\setminus\set{1_{\vesz H}}$ such that 
$\pair{0_{L_\lambda}}{1_{L_\lambda}}$ belongs to $\bigvee\set{\cgi{L_\lambda}{a_p}{b_p}: p\in T}$.
Thus, there exists a finite chain $0_{L_\lambda}=c_0<c_1<\dots<c_k=0_{L_\lambda}$ such that, for $i=1,\dots, k$, $\pair{c_{i-1}}{c_i}\in \bigcup\set{\cgi{L_\lambda}{a_p}{b_p}: p\in T}$. Each of these memberships are witnessed by  
finitely many ``witness'' elements according to \eqref{lajtorjaformula}; see Lemma~\ref{llajtorja}. 
Taking all these memberships into account, there are only finitely many witness elements all together. Hence, there exists an $\eta<\lambda$ such that $L_\eta$ contains all these elements. Applying Lemma~\ref{llajtorja} in the converse direction, we obtain that $\pair{0_{L_\eta}}{1_{L_\eta}}=\pair{0_{L_\lambda}}{1_{L_\lambda}}$ belongs to $\bigvee\set{\cgi{L_\eta}{a_p}{b_p}: p\in T}$, which is a contradiction since $\alg L_\eta$ satisfies \labaxg{}.
Consequently, $\alg L_\lambda$ is an auxiliary structure. 

Clearly, $\alg L_\lambda$ satisfies \eqref{dirun} and \eqref{djrun} since so do the $\alg L_\eta$ for $\eta<\lambda$. If $\pair pq\in D$ and $p\parallel_\lambda q$, then $p\parallel_\eta q$ for some (actually, for every) $\eta<\lambda$. 
Hence, the satisfaction of \eqref{spnnning} for $\alg L_\lambda$ follows the same way as in the Successor Step since $L_\eta$ is a $\set{0,1}$-sublattice of $L_\lambda$.
\end{limitstep}

We have seen that $\alg L_\nu$ is an auxiliary structure for all $\lambda\leq\kappa$. Letting $\lambda$ equal $\kappa$, we obtain the existence part of the lemma. The last sentence of the lemma follows from the construction and basic cardinal arithmetics.
\end{proof}

We are now in the position to  complete the paper.

\begin{proof}[Proof of Theorem~\ref{thmmain}] In order to prove part \eqref{thmmainb} of the theorem, assume that  $P=\tuple{P;\nu_P}$ is an ordered set with zero and  it is the union of a chain of principal ideals. By Lemma~\ref{cofinalitylemma}, there exist an ordinal number $\kappa$ and a cofinal chain $C=\set{c_\iota: \iota<\kappa}$ in $P$ such that $0_P=c_0$ and, for $\iota,\mu<\kappa$ we have  $\iota< \mu \iff c_\iota< c_\mu$. The cofinality of $C$ means that  $P$ is the union of the principal ideals $H_\iota=\ideal{c_\iota}$, $\iota<\kappa$. 
We let $H_\kappa=\bigcup_{\iota<\kappa}H_ \iota$ and $\nu_\kappa=\bigcup_{\iota<\kappa}\nu_{H_i}$, where $\nu_{H_i}$ denotes the restriction $\restrict{\nu_P}{H_i}$. 
Clearly, $P=H_\kappa$ and $\nu_P= \nu_\kappa$, that is, $\tuple{P;\nu_P}=\tuple{H_\kappa;\nu_\kappa}$. Note that $H_\kappa$ is not  a principal ideal in general since $P$ need not be bounded.

For each $\lambda \leq \kappa$, we define an auxiliary structure $\alg L_\lambda=\tuple{L_\lambda;\gamma_\lambda, H_\lambda,\nu_\lambda,\delta_{\lambda},\epsilon_{\lambda}}$ 
such that $\alg L_\mu$ is a substructure of $\alg L_\lambda$ for every $\mu\leq \lambda$; we do this by (transfinite) induction as follows.

\begin{initstep} 
We start with the one-element lattice $L_0$ and $H_0=\set{c_0}=\set{0_P}$, and define $\alg L_0$ in the only possible way. 
\end{initstep}

\begin{successorstep}
Assume that  $\lambda=\eta+1$ is a successor ordinal. We apply Lemma~\ref{combinlemma} to obtain $\alg L_\lambda$ from $\alg L_\eta$. This is possible since  $H_\eta$ is an order ideal of $H_\lambda$. Note that Lemma~\ref{combinlemma} does not assert the uniqueness of  $\vesz{\alg L}$, and, in principle, it could be a problem later that  $\alg L_\lambda$ is not uniquely defined. However, this is not a real problem since we can easily solve it as follows.

Let $\tau_0$ be the smallest \emph{infinite} ordinal number such that ${|P|}\leq |\tau_0|$, let $\tau=2^{\tau_0}$,  and let $\pi$ be the smallest ordinal with $|P|=|\pi|$. 
Note that $|\tau|$ is at least the power of continuum but $|\pi|$ can be finite. 
Let $P=\set{h_\iota: \iota<\pi}$ such that $h_\iota\neq h_\eta$ for $\iota<\eta<\pi$. Also,  take a set $T=\set{t_\iota: \iota<\tau}$ such that $t_\iota\neq t_\eta$  for $\iota<\eta<\tau$. The point is that, after selecting the well-ordered cofinal chain $C$ above,  we can use the well-ordered index sets 
$\set{\iota: \iota<\pi}$ and $\set{\iota: \iota<\tau}$ to make every part of our compound construction unique. Namely, when we well-order $D$, defined in \eqref{ezD}, we use the lexicographic ordering of the index set $\set{\iota: \iota<\pi}\times \set{\iota: \iota<\pi}$.
When we define lattices, their base sets will be initial subsets of $T$; a subset $X$ of $T$ is \emph{initial} if, for all $\mu<\iota<\tau$,  $\,t_\iota\in X$  implies $t_\mu\in X$. If we have to add new lattice elements, like a new top or $c_{pq}$, etc., then we always add the first one of $T$ that has not been used previously. 
Cardinality arithmetics shows that $T$ is never exhausted. This way, we have made the definition of $\alg L_\lambda$ unique.

Clearly, $\alg L_\iota$ is a substructure of $\alg L_\lambda$ for $\iota< \lambda$; either by Lemma~\ref{combinlemma} if $\iota=\eta$, or by the induction hypothesis and transitivity if $\iota<\eta$.
\end{successorstep}

\begin{limitstep}
If $\lambda$ is a limit ordinal, then 
first we form the union 
\[\alg L'_\lambda=\tuple{L'_\lambda;\gamma'_\lambda,H'_\lambda,\nu'_\lambda,\delta'_\lambda, \epsilon'_\lambda }=
\tuple{\bigcup_{\eta<\lambda}L_\eta;\bigcup_{\eta<\lambda}\gamma_\eta, \bigcup_{\eta<\lambda}H_\eta,\bigcup_{\eta<\lambda}\nu_\eta,\bigcup_{\eta<\lambda}\delta_\eta, \bigcup_{\eta<\lambda}\epsilon_\eta }\text.
\]
Note that $\nu'_{\lambda}=\restrict {\nu_P} {H'_\lambda}$. The same way as in  the proof of  Lemma~\ref{combinlemma}, we obtain that $\alg L'_\lambda$ is  an auxiliary structure;  the only difference is that now \labaxg{} trivially holds in $\alg L_\lambda$ since $H'_\lambda$ does not have a largest element. To see this, suppose for contradiction that $u$ is the largest element of $H'_\lambda$. Then $u\in H_\eta$ for some $\eta<\lambda$. Since $\lambda$ is a limit ordinal, $\eta+1<\lambda$. Hence $c_{\eta+1}\leq u\leq c_\eta$, which contradicts $c_\eta<c_{\eta+1}$.   

Clearly, $\tuple{H'_\lambda;\nu'_\lambda}$ is an order ideal in $\tuple{H_\lambda;\nu_\lambda}$. Thus, applying Lemma~\ref{combinlemma} to this situation, we obtain an auxiliary structure $\vesz{\alg L}$, and we let $\alg L_\lambda= \vesz{\alg L}$. Obviously, for all $\eta<\lambda$,  $\alg L_\eta$ is a substructure of $\alg L_\lambda$.

Now, we have constructed an auxiliary structure $\alg L_\lambda$ for each $\lambda\leq \kappa$.  In particular, 
$\alg L_\kappa=\tuple{L_\kappa;\gamma_\kappa, H_\kappa,\nu_\kappa,\delta_{\kappa},\epsilon_{\kappa}} =
\tuple{L_\kappa;\gamma_\kappa, P,\nu_P,\delta_{\kappa},\epsilon_{\kappa}} 
$
is an auxiliary structure. Thus, by 
Lemma~\ref{impclM}, $\princ{L_\kappa}\cong\tuple{P;\nu_P}$, which proves  part \eqref{thmmainb} of the theorem. 
\end{limitstep}

In order to prove part \eqref{thmmaina}, 
assume that $L$ is a countable lattice. 
Obviously, we have $|\princ L|\leq|\pairs L|\leq\aleph_0$, and we already mentioned that $\princ L$ is always a directed ordered set with 0, no matter what the size $|L|$ of $L$ is.

Conversely, let $P$ be a directed ordered set with 0 such that $|P|\leq\aleph_0$. Then there is an ordinal $\kappa\leq \omega$ (where $\omega$ denotes the least infinite ordinal) such that $P=\set{p_i: i<\kappa}$. Note that $\set{i:i<\kappa}$ is a subset of the set of nonnegative integer numbers. 
For $i,j<\kappa$, there exists a smallest $k$ such that $p_i\leq p_k$ and $p_j\leq p_k$; we let  $p_i\sqcup p_j=p_k$. This defines a binary  operation on $P$; it need not be a semilattice operation. Let $q_0=p_0$. For $0<i<\kappa$, let $q_i=q_{i-1}\sqcup p_i$. A trivial induction shows that $q_i$ is an upper bound of $\set{p_0,p_1,\dots,p_i}$, for all $i<\kappa$, and $q_{i-1}\leq_P q_i$ for all $0<i<\kappa$.   Hence, 
the principal ideals $\ideal{q_i}$ form a chain 
$\set{\ideal {q_i}: i<\kappa}$, and 
$P$ is the union of these principal ideals. Therefore, part \eqref{thmmainb} of the Theorem yields a lattice $L$ such that $P$ is isomorphic to $\princ L$. Since the $\ideal {q_i}$ are countable and there are countably many of them, and since all the lemmas we used in the proof of part \eqref{thmmainb} of the theorem preserve the property ``countable'', $L$ is countable. 
\end{proof}

\end{document}